 \newtheorem{thm}{Theorem}[section]
 \theoremstyle{definition}
 \newtheorem{defn}{Definition}[section]
 \theoremstyle{remark}
 \numberwithin{equation}{section}
\begin{document}

\begin{center}

\begin{title}
\title{\bf\Large{{Lyapunov-Type Inequalities for Discrete Riemann-Liouville Fractional Boundary Value Problems}}}
\end{title}

\vskip 0.25 in

\begin{author}
\author {Jagan Mohan Jonnalagadda\footnote[1]{Department of Mathematics, Birla Institute of Technology and Science Pilani, Hyderabad - 500078, Telangana, India. email: j.jaganmohan@hotmail.com}}
\end{author}

\end{center}

\vskip 0.25 in

\noindent{\bf Abstract:} In this article we establish a few Lyapunov-type inequalities for two-point discrete fractional boundary value problems involving Riemann-Liouville type backward differences. To illustrate the applicability of established results, we obtain criteria for the nonexistence of nontrivial solutions and estimate lower bounds for eigenvalues of the corresponding eigenvalue problems. We also apply these inequalities to deduce criteria for the nonexistence of real zeros of certain discrete Mittag-Leffler functions.

\vskip 0.25 in

\noindent{\bf Key Words:} Fractional order, backward (nabla) difference, boundary value problem, Green's function, Lyapunov inequality, discrete Mittag-Leffler function

\vskip 0.25 in

\noindent{\bf AMS Classification:} Primary 34A08, 34B05, 26D15; Secondary 39A10, 39A12.

\vskip 0.25 in

\section{Introduction}

In 1907, Lyapunov \cite{L} provided a necessary condition, known as the Lyapunov inequality, for the existence of a nontrivial solution of Hill's equation associated with Dirichlet boundary conditions.
\begin{thm} \cite{L} \label{ODE}
If the boundary value problem
\begin{equation} \label{BVP}
\begin{cases}
y''(t) + p(t)y(t) = 0, \quad a < t < b,\\
y(a) = 0, \quad y(b) = 0, 
\end{cases}
\end{equation}
has a nontrivial solution, where $p : [a, b] \rightarrow \mathbb{R}$ is a continuous function, then
\begin{equation} \label{Lyp}
\int^{b}_{a}|p(s)|ds > \frac{4}{(b - a)}.
\end{equation}
\end{thm}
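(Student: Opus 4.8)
The plan is to prove the classical Lyapunov inequality \eqref{Lyp} for the continuous boundary value problem \eqref{BVP}. Let $y$ be a nontrivial solution. The first step is to establish a suitable integral representation for $y$ in terms of a Green's function: since $y'' = -p(t)y$ with $y(a)=y(b)=0$, one has
\begin{equation*}
y(t) = \int_{a}^{b} G(t,s)\, p(s)\, y(s)\, ds,
\end{equation*}
where $G(t,s)$ is the Green's function of $-y''$ subject to Dirichlet conditions on $[a,b]$, namely
\begin{equation*}
G(t,s) = \frac{1}{b-a}
\begin{cases}
(t-a)(b-s), & a \le t \le s \le b,\\
(s-a)(b-t), & a \le s \le t \le b.
\end{cases}
\end{equation*}

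The second step is to bound $G$. A short calculus exercise shows that
\begin{equation*}
0 \le G(t,s) \le G(s,s) = \frac{(s-a)(b-s)}{b-a}
\end{equation*}
for all $t,s \in [a,b]$, and moreover $\max_{s\in[a,b]} G(s,s) = (b-a)/4$, attained at $s = (a+b)/2$.

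The third step combines these. Let $t_0 \in [a,b]$ be a point where $|y|$ attains its maximum $M = \|y\|_\infty > 0$ (which is positive and not attained at the endpoints since $y$ is nontrivial). Evaluating the representation at $t_0$ and taking absolute values,
\begin{equation*}
M = |y(t_0)| \le \int_{a}^{b} G(t_0,s)\, |p(s)|\, |y(s)|\, ds \le M \int_{a}^{b} G(s,s)\, |p(s)|\, ds \le M \cdot \frac{b-a}{4} \int_{a}^{b} |p(s)|\, ds.
\end{equation*}
Dividing by $M>0$ gives $\int_a^b |p(s)|\,ds \ge 4/(b-a)$. To obtain the strict inequality one argues that equality throughout would force $|y(s)| \equiv M$ on the support of $p$ and $G(t_0,s) \equiv G(s,s)$, which is impossible for a nontrivial solution vanishing at both endpoints (in particular $y$ cannot be constant); hence the inequality is strict.

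The main obstacle is the justification of strictness: the chain of inequalities above only yields ``$\ge$'' directly, and one must rule out equality carefully. The cleanest route is to observe that $G(t_0, s) = G(s,s)$ cannot hold for all relevant $s$ unless $t_0$ equals every such $s$, and separately that $|y|$ cannot be identically $M$; either observation, pushed slightly, upgrades the bound to the strict form \eqref{Lyp}. Everything else — the Green's function formula and the elementary maximization — is routine.
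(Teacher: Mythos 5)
The paper never proves this statement: Theorem \ref{ODE} is quoted from Lyapunov's 1907 paper as background, so there is no in-paper argument to compare against. Your Green's-function proof is essentially correct and is, in fact, the continuous template that the author later follows for the discrete results (Theorems \ref{LF Theorem 2} and \ref{RF Theorem 2}): represent the solution via the Green's function, prove $0\le G(t,s)\le G(s,s)\le (b-a)/4$, and run the sup-norm estimate at a maximizer $t_0$ to get $\int_a^b|p(s)|\,ds\ \ge\ 4/(b-a)$ (note the discrete analogues in the paper are stated with $\ge$ rather than $>$). The only part of your write-up that is a genuine sketch is the strictness, and the remark that ``$|y|$ cannot be identically $M$'' is not by itself the right reason, since equality only forces $|y(s)|=M$ at points where $p(s)\neq 0$ and $G(s,s)>0$. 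The clean completion uses the last inequality alone: if $\int_a^b|p(s)|\,ds=4/(b-a)$, then equality in $\int_a^b G(s,s)|p(s)|\,ds\le\frac{b-a}{4}\int_a^b|p(s)|\,ds$ forces $|p(s)|\bigl(\tfrac{b-a}{4}-G(s,s)\bigr)=0$ for every $s$; since $G(s,s)<\tfrac{b-a}{4}$ for all $s\neq\tfrac{a+b}{2}$, continuity of $p$ gives $p\equiv 0$, contradicting $\int_a^b|p(s)|\,ds=4/(b-a)>0$ (alternatively, $p\equiv0$ with the Dirichlet conditions forces $y\equiv 0$). With that argument inserted, your proof is complete and correct.
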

The Lyapunov inequality \eqref{Lyp} is applicable in various problems related to the theory differential equations. Due to its importance, the Lyapunov inequality has been generalized in many forms. For more details, we refer \cite{B,PA,PI,AT,Y1,Y2} and the references therein.

In this line, several authors \cite{CT,D,F1,F2,J1,J2,O,R,S} have investigated Lyapunov-type inequalities for various classes of fractional boundary value problems. In 2013, Ferreira \cite{F1} generalized Theorem \ref{ODE} to the case where the classical second-order derivative in \eqref{BVP} is replaced by an $\alpha^{\text{th}}$-order ($1 < \alpha \leq 2$) Riemann-Liouville derivative.
\begin{thm} \cite{F1} \label{FDE RL}
If the fractional boundary value problem
\begin{equation*} \label{BVP RL}
\nonumber \begin{cases}
\big{(}D^{\alpha}_{a}y\big{)}(t) + p(t)y(t) = 0, \quad a < t < b,\\
y(a) = 0, \quad y(b) = 0, 
\end{cases}
\end{equation*}
has a nontrivial solution, where $p : [a, b] \rightarrow \mathbb{R}$ is a continuous function, then
\begin{equation*} \label{Lyp RL}
\int^{b}_{a}|p(s)|ds > \Gamma(\alpha) \Big{(}\frac{4}{b - a}\Big{)}^{\alpha - 1}.
\end{equation*}
\end{thm}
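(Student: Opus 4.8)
The plan is to convert the boundary value problem into an equivalent integral equation governed by the Green's function of $D^\alpha_a$ under Dirichlet conditions, and then to extract the Lyapunov inequality from a sharp pointwise bound on that kernel. First I would invert the equation: applying the Riemann--Liouville fractional integral to $\big(D^\alpha_ay\big)(t)=-p(t)y(t)$ produces $y(t)=c_1(t-a)^{\alpha-1}+c_2(t-a)^{\alpha-2}-\tfrac{1}{\Gamma(\alpha)}\int_a^t(t-s)^{\alpha-1}p(s)y(s)\,ds$; the condition $y(a)=0$ forces $c_2=0$ (the term $(t-a)^{\alpha-2}$ is unbounded near $a$ when $\alpha<2$), and $y(b)=0$ fixes $c_1$. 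Collecting terms gives
\begin{equation*}
y(t)=\int_a^b G(t,s)\,p(s)\,y(s)\,ds,\qquad
G(t,s)=\frac{1}{\Gamma(\alpha)}
\begin{cases}
\dfrac{\big[(t-a)(b-s)\big]^{\alpha-1}}{(b-a)^{\alpha-1}}-(t-s)^{\alpha-1}, & a\le s\le t\le b,\\[3mm]
\dfrac{\big[(t-a)(b-s)\big]^{\alpha-1}}{(b-a)^{\alpha-1}}, & a\le t\le s\le b.
\end{cases}
\end{equation*}
Since $\alpha>1$, the functions $(t-a)^{\alpha-1}$ and the fractional integral of a bounded function are continuous on $[a,b]$, so $y\in C[a,b]$; being nontrivial, $\|y\|_\infty:=\max_{[a,b]}|y|>0$.

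The technical core is the two-sided estimate $0\le G(t,s)\le G(s,s)=\tfrac{1}{\Gamma(\alpha)}\tfrac{[(s-a)(b-s)]^{\alpha-1}}{(b-a)^{\alpha-1}}$ for all $t,s\in[a,b]$. On the branch $t\le s$ it is immediate, since $t\mapsto(t-a)^{\alpha-1}$ is nonnegative and increasing. On the branch $t\ge s$, writing $g(t)=\Gamma(\alpha)\,G(t,s)$, one checks $g(b)=0$ and
\begin{equation*}
g'(t)=(\alpha-1)\Big[\tfrac{(b-s)^{\alpha-1}}{(b-a)^{\alpha-1}}\,(t-a)^{\alpha-2}-(t-s)^{\alpha-2}\Big]\le 0,
\end{equation*}
the sign following from $(b-s)^{\alpha-1}\le(b-a)^{\alpha-1}$ together with $(t-a)^{\alpha-2}\le(t-s)^{\alpha-2}$ (valid because $\alpha-2\le 0$ and $t-a\ge t-s$); hence $g$ decreases from $g(s)>0$ to $g(b)=0$, which gives both $G(t,s)\ge 0$ and $G(t,s)\le G(s,s)$. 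Finally, maximizing $(s-a)(b-s)$ over $[a,b]$ shows
\begin{equation*}
\max_{a\le s\le b}G(s,s)=G\!\Big(\tfrac{a+b}{2},\tfrac{a+b}{2}\Big)=\frac{1}{\Gamma(\alpha)}\Big(\frac{b-a}{4}\Big)^{\alpha-1},
\end{equation*}
with the maximum attained only at $s=(a+b)/2$.

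To conclude, I would choose $t_0\in[a,b]$ with $|y(t_0)|=\|y\|_\infty$ and use the integral representation to write
\begin{equation*}
\|y\|_\infty=|y(t_0)|\le\int_a^b|G(t_0,s)|\,|p(s)|\,|y(s)|\,ds\le\|y\|_\infty\int_a^b G(s,s)\,|p(s)|\,ds.
\end{equation*}
Dividing by $\|y\|_\infty>0$ and bounding $G(s,s)$ by its diagonal maximum yields $\int_a^b|p(s)|\,ds\ge\Gamma(\alpha)\big(4/(b-a)\big)^{\alpha-1}$. The inequality is in fact strict: $|G(t_0,s)|$ can equal $\tfrac{1}{\Gamma(\alpha)}\big(\tfrac{b-a}{4}\big)^{\alpha-1}$ only when $G(s,s)$ is maximal (so $s=(a+b)/2$) and $|G(t_0,s)|=G(s,s)$ (so, by the strict monotonicity above, $t_0=s$), i.e.\ at a single point; and $p\not\equiv 0$, for otherwise the problem would force $y\equiv 0$, so $\int_a^b|p(s)|\,ds>0$ and the kernel bound is strict on a set of positive measure.

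The main obstacle I anticipate is the positivity/monotonicity analysis of $G$ on the diagonal branch $a\le s\le t\le b$: pinning down the sign of $g'$ and, more delicately, verifying that the bound $G(t,s)\le G(s,s)$ is strict off the diagonal — this strictness is precisely what upgrades the final inequality from ``$\ge$'' to ``$>$'', and it is where the hypothesis $\alpha\le 2$ is genuinely used.
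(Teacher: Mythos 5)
The paper states this theorem without proof (it is quoted as background from Ferreira \cite{F1}), and your proposal correctly reconstructs essentially that source's argument---inversion of the problem to $y(t)=\int_a^b G(t,s)p(s)y(s)\,ds$, the kernel estimates $0\le G(t,s)\le G(s,s)\le \frac{1}{\Gamma(\alpha)}\big(\frac{b-a}{4}\big)^{\alpha-1}$ with maximum only at $s=\frac{a+b}{2}$, and the sup-norm bound---which is also the same Green's-function template this paper follows for its discrete nabla analogues in Sections 3 and 4. Two trifles only: your reason for $c_2=0$ (unboundedness of $(t-a)^{\alpha-2}$ at $t=a$) covers $1<\alpha<2$, the endpoint case $\alpha=2$ being the direct evaluation, and the strictness step is essentially right but is worth one explicit line, namely that $p\not\equiv 0$ (else $y\equiv 0$) together with continuity of $p$ and of $G(t_0,\cdot)$ gives $\int_a^b\big(\max_{s}G(s,s)-G(t_0,s)\big)\,|p(s)|\,ds>0$.
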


Moving to discrete calculus, in 1983, Cheng \cite{C1} developed the following discrete analogue of the Lyapunov inequality for the first time.
\begin{thm} \cite{C1} \label{D ODE}
If the boundary value problem
\begin{equation*} \label{D BVP}
\begin{cases}
\big{(}\Delta^{2}u\big{)}(t - 1) + q(t)y(t) = 0, \quad t \in \mathbb{N}^b_{a},\\
u(a - 1) = 0, \quad u(b + 1) = 0, 
\end{cases}
\end{equation*}
has a nontrivial solution, where $q$ is a nonnegative function defined on $\mathbb{N}^b_{a}$, then
\begin{equation*} \label{D LyP}
\sum^{b}_{s = a}q(s) > \begin{cases}
\frac{2(b - a + 1) + 1}{(b - a + 1)(b - a + 2)}, \quad \mathrm{if} ~ (b - a) ~ \mathrm{is ~ odd},\\
\frac{2}{(b - a + 2)},  \quad \hspace{32pt} \mathrm{if} ~ (b - a) ~ \mathrm{is ~ even}.
\end{cases}
\end{equation*}
\end{thm}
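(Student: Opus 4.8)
The plan is to encode the boundary value problem as a summation equation through its Green's function and then run the classical maximum-modulus argument, keeping track of where the parity of $b-a$ enters. First I would check that the homogeneous problem $-(\Delta^{2}u)(t-1)=0$ on $\mathbb{N}^{b}_{a}$ with $u(a-1)=u(b+1)=0$ admits only $u\equiv 0$ (its solutions are affine in $t$, and two distinct vanishing values kill both coefficients); hence the inhomogeneous problem $-(\Delta^{2}u)(t-1)=h(t)$, $u(a-1)=u(b+1)=0$, has the unique solution $u(t)=\sum_{s=a}^{b}G(t,s)h(s)$, with $G$ built in the usual way — homogeneous solutions to either side of $s$, boundary conditions imposed, then value matched and a unit jump imposed on the first difference across $t=s$. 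After the shift $t\mapsto t-a+1$ this yields
\[
G(t,s)=\frac{1}{b-a+2}\begin{cases}(b-s+1)(t-a+1), & a-1\le t\le s,\\ (s-a+1)(b-t+1), & s\le t\le b+1,\end{cases}
\]
and I would record the two features I need: $G\ge 0$, and for each fixed $t$ the map $s\mapsto G(t,s)$ increases for $s\le t$ and decreases for $s\ge t$, so that $\max_{s}G(t,s)=G(t,t)=\dfrac{(t-a+1)(b-t+1)}{b-a+2}$.

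Next I would apply this to a nontrivial solution $u$, which then satisfies $u(t)=\sum_{s=a}^{b}G(t,s)q(s)u(s)$. Since $u$ vanishes at $a-1$ and $b+1$ but not identically, $M:=\max_{\mathbb{N}^{b+1}_{a-1}}|u|$ is positive and is attained at some $c\in\mathbb{N}^{b}_{a}$. Using $q\ge 0$, $G\ge 0$, $|u(s)|\le M$, and $G(c,s)\le G(c,c)$,
\[
M=|u(c)|\le\sum_{s=a}^{b}G(c,s)q(s)|u(s)|\le G(c,c)\,M\sum_{s=a}^{b}q(s),
\]
so, dividing by $M>0$, $\sum_{s=a}^{b}q(s)\ge 1/G(c,c)\ge 1/\max_{c\in\mathbb{N}^{b}_{a}}G(c,c)$. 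Writing $i=c-a+1$ one has $G(c,c)=i(b-a+2-i)/(b-a+2)$, where $i(b-a+2-i)$ is a product of two positive integers whose sum is the fixed number $b-a+2$; such a product is largest when the two factors are as nearly equal as possible, and this is precisely where parity intervenes. If $b-a$ is even then $b-a+2$ is even, the factors can be made equal, and $\max_{c}G(c,c)=(b-a+2)/4$; if $b-a$ is odd the optimal split is $\big(\tfrac{b-a+1}{2},\tfrac{b-a+3}{2}\big)$ and $\max_{c}G(c,c)=\dfrac{(b-a+1)(b-a+3)}{4(b-a+2)}$.

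These give $\sum_{s=a}^{b}q(s)\ge \dfrac{4}{b-a+2}$ when $b-a$ is even and $\sum_{s=a}^{b}q(s)\ge\dfrac{4(b-a+2)}{(b-a+1)(b-a+3)}$ when $b-a$ is odd, and to finish I would observe that each of these is strictly larger than the constant in the statement. The even case is immediate, $\tfrac{4}{b-a+2}>\tfrac{2}{b-a+2}$; the odd case reduces, after cancelling $b-a+1$, to $4(b-a+2)^{2}>(2(b-a)+3)(b-a+3)$, i.e.\ $2x^{2}+7x+7>0$ with $x=b-a$, which is clear. The hard part will be the Green's-function bookkeeping — deriving the explicit piecewise formula and verifying the monotonicity that gives $\max_s G(t,s)=G(t,t)$ — rather than anything deep; the integer maximization producing the odd/even split is the conceptual point but is elementary once set up. It is worth noting that the strictness is in fact \emph{forced} by the slack between the sharp non-strict estimate $1/\max_c G(c,c)$ and the weaker stated constants: the piecewise-linear ``tent'' together with a point mass of $q$ at the midpoint attains $1/\max_c G(c,c)$ exactly, so the sharp bound itself cannot be upgraded to a strict one.
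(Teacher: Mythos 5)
The paper states this result (Theorem \ref{D ODE}) as a quoted background fact from Cheng's 1983 paper and gives no proof of it, so there is nothing internal to compare against; judged on its own, your argument is correct and complete. It is also exactly the template the author later uses for the new nabla-fractional results (Theorems \ref{LF Theorem 2} and \ref{RF Theorem 2}): represent the solution via the Green's function, bound the sup norm of $u$ by $\max G$ times $\sum q$ times the sup norm, and cancel. Your Green's function $G(t,s)=\frac{1}{b-a+2}\min(t-a+1,\,s-a+1)\,(b+1-\max(t,s))$ is the right one (the unit jump in $\Delta_t G$ across $t=s$ checks out), the maximization of $i(n-i)$ over integers with $n=b-a+2$ correctly produces the parity split, and your final comparisons $4/(b-a+2)>2/(b-a+2)$ and $4(b-a+2)^2>(2(b-a)+3)(b-a+3)$ are both valid, so the stated constants follow a fortiori from the sharp ones $4/(b-a+2)$ and $4(b-a+2)/\big((b-a+1)(b-a+3)\big)$ that your argument actually delivers (these sharper constants are the ones usually attributed to Cheng; the version quoted in the paper is strictly weaker). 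The only thing I would trim is the closing remark about strictness being ``forced'': it plays no role in the proof, since the strict inequality in the statement is obtained simply because your non-strict bound exceeds the stated constant strictly, and the claim that a point mass of $q$ at the midpoint genuinely realizes the extremal case would itself need verification if you wanted to assert sharpness.
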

Here $\Delta^{2}$ denotes the classical second-order difference operator. For a detailed discussion, we refer the reader to \cite{C2,CL,GU,H,LY,U,ZT} and the references therein.

On the other hand, in 2015, Ferreira \cite{F3} generalized Theorem \ref{D ODE} for $\alpha^{\text{th}}$-order ($1 < \alpha \leq 2$) Riemann-Liouville type forward differences ${\Delta}^{\alpha}$.
\begin{thm} \cite{F3} \label{D FDE RL}
If the fractional boundary value problem
\begin{equation*} \label{D BVP RL}
\begin{cases}
\big{(}{\Delta}^{\alpha}u\big{)}(t) + q(t + \alpha - 1)u(t + \alpha - 1) = 0, \quad t \in \mathbb{N}^{b + 1}_{1}, \quad b \geq 2, \\
y(\alpha - 2) = 0, \quad y(\alpha + b + 1) = 0, 
\end{cases}
\end{equation*}
has a nontrivial solution, where $q$ is a nonnegative function defined on $\{\alpha, \alpha + (\alpha - 1), \alpha + 2(\alpha - 1), \cdots, \alpha + b\}$, then
\begin{equation*} \label{D Lyp RL 1}
\sum^{b + 1}_{s = 0}q(s + \alpha - 1) > \begin{cases}
4 \Gamma(\alpha)\frac{\Gamma(b + \alpha + 2)\Gamma^{2}(\frac{b}{2} + 2)}{(b + 2\alpha)(b + 2)\Gamma^{2}(\frac{b}{2} + \alpha)\Gamma(b + 3)}, \quad \mathrm{if} ~ b ~ \mathrm{is ~ even},\\
\Gamma(\alpha)\frac{\Gamma(b + \alpha + 2)\Gamma^{2}(\frac{b + 3}{2} + 2)}{\Gamma(b + 3)\Gamma^{2}(\frac{b + 1}{2} + \alpha)}, \quad \hspace{34pt} \mathrm{if} ~ b ~ \mathrm{is ~ odd}.
\end{cases}
\end{equation*}
\end{thm}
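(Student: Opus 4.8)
The plan is to follow the Green's function route that is by now standard for Lyapunov-type inequalities, carried out here in the Riemann--Liouville delta-fractional setting. First I would convert the boundary value problem into an equivalent summation equation: applying the fractional sum operator $\Delta^{-\alpha}$ to $\big(\Delta^{\alpha}u\big)(t)=-q(t+\alpha-1)u(t+\alpha-1)$ produces a general solution $u(t)=c_{1}t^{\underline{\alpha-1}}+c_{2}t^{\underline{\alpha-2}}-\big(\Delta^{-\alpha}\big[q(\cdot+\alpha-1)u(\cdot+\alpha-1)\big]\big)(t)$, and imposing the two homogeneous boundary conditions pins down $c_{1},c_{2}$ and yields
\[
u(t+\alpha-1)=\sum_{s=0}^{b+1}G(t,s)\,q(s+\alpha-1)\,u(s+\alpha-1),
\]
where $G$ is the Green's function of the associated homogeneous problem. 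Writing $G$ explicitly in terms of falling factorial functions is then a direct computation: it has two branches, a ``boundary'' term $\dfrac{t^{\underline{\alpha-1}}\big(b+\alpha-\sigma(s)\big)^{\underline{\alpha-1}}}{(b+\alpha)^{\underline{\alpha-1}}}$ valid for all admissible pairs, corrected by $-(t-\sigma(s))^{\underline{\alpha-1}}$ when $s<t$ (here $\sigma(s)=s+1$).

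The core of the argument is the analysis of $G$. I would establish: (i) $G(t,s)\ge 0$ for every admissible pair $(t,s)$, which on the lower branch reduces to the elementary factorial inequality $(t-\sigma(s))^{\underline{\alpha-1}}(b+\alpha)^{\underline{\alpha-1}}\le t^{\underline{\alpha-1}}\big(b+\alpha-\sigma(s)\big)^{\underline{\alpha-1}}$; and (ii) an exact evaluation of $M:=\max_{(t,s)}G(t,s)$. For (ii) I would fix $s$, compute the first difference $\Delta_{t}G(t,s)$, and track the change of its sign to locate the node $t^{\ast}(s)$ at which $G(\cdot,s)$ is maximal; using $t^{\underline{\alpha-1}}=\Gamma(t+1)/\Gamma(t-\alpha+2)$ this reduces the problem to maximizing an explicit product of Gamma functions in the single variable $s$. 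The parity dichotomy enters precisely here: the continuous optimum sits at the midpoint of the interval, and whether the nearest admissible node is the central one or one of two symmetric neighbours depends on whether $b$ is even or odd; these two placements produce the two Gamma-function expressions in the statement. I expect this maximization --- the bookkeeping of which node is extremal, together with the discrete monotonicity estimates needed to justify it --- to be the main obstacle.

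Finally, let $u$ be a nontrivial solution and set $\|u\|:=\max_{t}|u(t+\alpha-1)|$, which is positive because $q\ge 0$ and $u\not\equiv 0$; pick $t_{0}$ with $|u(t_{0}+\alpha-1)|=\|u\|$. From the summation representation,
\[
\|u\|=|u(t_{0}+\alpha-1)|\le\sum_{s=0}^{b+1}G(t_{0},s)\,q(s+\alpha-1)\,|u(s+\alpha-1)|\le M\,\|u\|\sum_{s=0}^{b+1}q(s+\alpha-1),
\]
so dividing by $\|u\|$ yields $\sum_{s=0}^{b+1}q(s+\alpha-1)\ge 1/M$. To upgrade this to a strict inequality I would examine the equality case: equality throughout would force $G(t_{0},s)=M$ and $|u(s+\alpha-1)|=\|u\|$ for every $s$ in the support of $q$, hence (since $G(t_{0},\cdot)$ attains $M$ only at the extremal node identified in step (ii)) $q$ must be concentrated there; feeding this back into the difference equation and the two vanishing boundary values forces $u\equiv 0$, a contradiction. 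Substituting the value of $1/M$ computed in the previous step, separated according to the parity of $b$, gives the two bounds in the statement.
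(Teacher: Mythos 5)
This statement is not proved in the paper at all: it is Ferreira's result, quoted verbatim from \cite{F3} as background in the introduction. Your outline is exactly the standard Green's-function strategy that \cite{F3} uses and that this paper itself reproduces in the nabla setting in Sections 3 and 4 (construct $G$, prove $G\ge 0$, compute $\max G$ --- with the parity of $b$ deciding which admissible node realizes the maximum --- and then run the $\|u\|\le M\|u\|\sum|q|$ argument), so in approach you are aligned with the source. The one soft spot is the final strictness step: your equality-case analysis (``$q$ concentrated at the extremal node forces $u\equiv 0$'') is only sketched and would need to be carried out carefully, e.g.\ by showing the maximum of $G(t_{0},\cdot)$ is attained at a single $s$ and that the resulting one-term representation is inconsistent with the boundary conditions; note that the paper's own nabla analogues (Theorems \ref{LF Theorem 2} and \ref{RF Theorem 2}) sidestep this issue by stating only the non-strict inequality.
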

Following Ferreira, authors of \cite{CT 1,CU,GG} have obtained generalized Lyapunov-type inequalities for various classes of delta fractional boundary value problems.

Motivated by these developments, in this article, we derive Lyapunov-type inequalities for two-point nabla Riemann-Liouville fractional boundary value problems.

\section{Preliminaries}
Throughout, we shall use the following notations, definitions and known results of fractional nabla calculus \cite{T,G,FP1,CH}. Denote the set of all real numbers by $\mathbb{R}$. Define $$\mathbb{N}_{a} : = \{a, a + 1, a + 2, \cdots\} ~ \text{and} ~ \mathbb{N}^{b}_{a} : = \{a, a + 1, a + 2, \cdots, b\},$$ for any $a$, $b \in \mathbb{R}$ such that $(b - a)$ is a positive integer. Assume that empty sums and products are taken to be 0 and 1, respectively.

\begin{defn} (Backward Jump Operator) \cite{BP}
The backward jump operator $\rho : \mathbb{N}_{a} \rightarrow \mathbb{N}_{a}$ is defined by $$\rho(t) = \max\{a, (t - 1)\}, ~ t \in \mathbb{N}_{a}.$$
\end{defn}

\begin{defn} (Gamma Function) \cite{KIL,P1} The Euler gamma function is defined by $$\Gamma (z) : = \int_0^\infty t^{z - 1} e^{-t} dt, \quad \Re(z) > 0.$$ Using the reduction formula $$ \Gamma (z + 1) = z\Gamma (z), \quad \Re(z) > 0,$$ the Euler gamma function can be extended to the half-plane $\Re(z) \leq 0$ except for $z \neq 0, -1, -2, \cdots$.
\end{defn}

\begin{defn} (Generalized Rising Function) \cite{CH}
For $t \in \mathbb{R} \setminus \{\cdots, -2, -1, 0\}$ and $r \in \mathbb{R}$ such that $(t + r) \in \mathbb{R} \setminus \{\cdots, -2, -1, 0\}$, the generalized rising function is defined by
\begin{equation}
\nonumber t^{\overline{r}} = \frac{\Gamma(t + r)}{\Gamma(t)}, \quad 0^{\overline{r}} : = 0.
\end{equation}
\end{defn}

\begin{defn} (Nabla Difference) \cite{BP}
Let $u: \mathbb{N}_{a} \rightarrow \mathbb{R}$ and $N \in \mathbb{N}_1$. The first order backward (nabla) difference of $u$ is defined by $$\big{(}\nabla u\big{)}(t) : = u(t) - u(t - 1), \quad t \in \mathbb{N}_{a + 1},$$ and the $N^{th}$-order nabla difference of $u$ is defined recursively by $$\big{(}{\nabla}^{N}u\big{)}(t) : = \Big{(}\nabla\big{(}\nabla^{N - 1}u\big{)}\Big{)}(t), \quad t\in \mathbb{N}_{a + N}.$$
\end{defn}

\begin{defn} (Nabla Sum) \cite{CH}
Let $u: \mathbb{N}_{a + 1} \rightarrow \mathbb{R}$ and $N \in \mathbb{N}_1$. The $N^{\text{th}}$-order nabla sum of $u$ based at $a$ is given by
\begin{equation}
\nonumber \big{(}\nabla ^{-N}_{a}u\big{)}(t) : = \frac{1}{(N - 1)!}\sum^{t}_{s = a + 1}(t - \rho(s))^{\overline{N - 1}}u(s),
~ t \in \mathbb{N}_{a + 1}.
\end{equation}
We define $\big{(}\nabla ^{-0}_{a}u\big{)}(t) = u(t)$, $t \in \mathbb{N}_{a + 1}$.
\end{defn}

\begin{defn} (Nabla Fractional Sum) \cite{CH}
Let $u: \mathbb{N}_{a + 1} \rightarrow \mathbb{R}$ and $\nu > 0$. The $\nu^{\text{th}}$-order nabla sum of $u$ based at $a$ is given by
\begin{equation}
\nonumber \big{(}\nabla ^{-\nu}_{a}u\big{)}(t) = \frac{1}{\Gamma(\nu)}\sum^{t}_{s = a + 1}(t - \rho(s))^{\overline{\nu - 1}}u(s),
~ t \in \mathbb{N}_{a + 1}.
\end{equation} 
\end{defn}

\begin{defn} (Nabla Fractional Difference) \cite{CH}
Let $u: \mathbb{N}_{a + 1} \rightarrow \mathbb{R}$, $\nu > 0$ and choose $N \in \mathbb{N}_1$ such that $N - 1 < \nu \leq N$. The Riemann-Liouville type $\nu^{\text{th}}$-order nabla difference of $u$ is given by
\begin{equation}
\nonumber \big{(}\nabla ^{\nu}_{a}u\big{)}(t) = \Big{(}\nabla^N\big{(}\nabla_{a}^{-(N - \nu)}u\big{)}\Big{)}(t), ~ t\in\mathbb{N}_{a + N}.
\end{equation}
\end{defn}

\begin{thm} (Alternative Definition of the Nabla Fractional Difference) \cite{AH}
Let $u: \mathbb{N}_{a} \rightarrow \mathbb{R}$, $\nu > 0$, $\nu \not\in \mathbb{N}_{1}$, and choose $N \in \mathbb{N}_1$ such that $N - 1 < \nu < N$. Then,
$$\big{(}\nabla ^{\nu}_{a}u\big{)}(t) = \frac{1}{\Gamma(-\nu)}\sum^{t}_{s = a + 1}(t - \rho(s))^{\overline{-\nu - 1}}u(s), \quad t \in \mathbb{N}_{a + 1}.$$
\end{thm}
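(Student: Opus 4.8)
The plan is to unwind the Riemann--Liouville definition $(\nabla^{\nu}_{a}u)(t) = \big(\nabla^{N}(\nabla^{-(N-\nu)}_{a}u)\big)(t)$ and evaluate the $N$ ordinary nabla differences one at a time, showing that each application of $\nabla$ decreases the order of the underlying fractional sum by exactly one. Before that I would record two elementary facts. \emph{(a) Power rule:} for any $\mu \in \mathbb{R}$ and any integer $s$ with $a+1 \le s \le t$, one has $\nabla_{t}\big[(t-\rho(s))^{\overline{\mu}}\big] = \mu\,(t-\rho(s))^{\overline{\mu-1}}$; this is a one-line computation from $t^{\overline{r}} = \Gamma(t+r)/\Gamma(t)$ together with $\Gamma(z+1)=z\Gamma(z)$, valid whenever none of the $\Gamma$'s hits a nonpositive integer. \emph{(b) Boundary value:} for $t \in \mathbb{N}_{a+1}$ we have $\rho(t)=t-1$, hence $(t-\rho(t))^{\overline{\mu}} = 1^{\overline{\mu}} = \Gamma(\mu+1)$, again provided $\mu \notin \{-1,-2,\dots\}$.

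The heart of the argument is a single index-lowering identity. For $\gamma \in \mathbb{R}\setminus\{0,-1,-2,\dots\}$ set
\begin{equation*}
g_{\gamma}(t) := \frac{1}{\Gamma(\gamma)}\sum_{s=a+1}^{t}(t-\rho(s))^{\overline{\gamma-1}}u(s), \qquad t \in \mathbb{N}_{a+1},
\end{equation*}
and put $g_{\gamma}(t):=0$ for $t \le a$ by the empty-sum convention, so that $\nabla g_{\gamma}$ is defined on all of $\mathbb{N}_{a+1}$. I would prove that $(\nabla g_{\gamma})(t) = g_{\gamma-1}(t)$ for every $t \in \mathbb{N}_{a+1}$. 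The computation is: write $(\nabla g_{\gamma})(t) = g_{\gamma}(t) - g_{\gamma}(t-1)$, split off the $s=t$ summand of $g_{\gamma}(t)$, apply fact~(a) termwise to the remaining sum $\sum_{s=a+1}^{t-1}$, and use fact~(b) together with $\Gamma(\gamma) = (\gamma-1)\Gamma(\gamma-1)$; the boundary term contributes precisely $u(t)$, which is exactly the $s=t$ summand that promotes $\sum_{s=a+1}^{t-1}$ back to $\sum_{s=a+1}^{t}$ in $g_{\gamma-1}$. This bookkeeping is where I expect the only real friction to lie --- in particular one must check that $\gamma-1$, though possibly negative, is never a nonpositive integer; here that is automatic, since the only indices arising in the iteration are $N-\nu,\,N-\nu-1,\dots,-\nu$, and $N-1<\nu<N$ with $\nu \notin \mathbb{N}_{1}$ makes each of them a non-integer.

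With the identity in hand the theorem follows by a trivial induction: the Nabla Fractional Sum definition gives $(\nabla^{-(N-\nu)}_{a}u)(t) = g_{N-\nu}(t)$, and applying $\nabla g_{\gamma} = g_{\gamma-1}$ a total of $N$ times yields
\begin{equation*}
(\nabla^{\nu}_{a}u)(t) = \big(\nabla^{N}g_{N-\nu}\big)(t) = g_{-\nu}(t) = \frac{1}{\Gamma(-\nu)}\sum_{s=a+1}^{t}(t-\rho(s))^{\overline{-\nu-1}}u(s), \qquad t \in \mathbb{N}_{a+1}.
\end{equation*}
Finally I would add a brief remark reconciling the domains: although $(\nabla^{\nu}_{a}u)(t)$ is a priori defined only for $t \in \mathbb{N}_{a+N}$, every step above is valid verbatim on $\mathbb{N}_{a+1}$ once the empty-sum convention is used for the lower values of $t$, so the stated formula holds on all of $\mathbb{N}_{a+1}$.
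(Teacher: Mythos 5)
The statement you were asked to prove is quoted in the paper as a background result with the citation \cite{AH}; the paper itself gives no proof, so there is no internal argument to compare against. Your proposal is correct and is essentially the standard derivation of this identity. The core computation checks out: with $g_{\gamma}(t)=\frac{1}{\Gamma(\gamma)}\sum_{s=a+1}^{t}(t-\rho(s))^{\overline{\gamma-1}}u(s)$ one indeed gets $g_{\gamma}(t)-g_{\gamma}(t-1)=u(t)+\frac{\gamma-1}{\Gamma(\gamma)}\sum_{s=a+1}^{t-1}(t-s+1)^{\overline{\gamma-2}}u(s)=g_{\gamma-1}(t)$, since the split-off term is $\frac{1}{\Gamma(\gamma)}1^{\overline{\gamma-1}}u(t)=u(t)$, which is exactly the $s=t$ summand of $g_{\gamma-1}(t)$, and $\frac{\gamma-1}{\Gamma(\gamma)}=\frac{1}{\Gamma(\gamma-1)}$. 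Your index bookkeeping is also right: since $N-1<\nu<N$, every intermediate order $N-\nu-k$, $k=0,\dots,N$, is a non-integer, so no gamma function is evaluated at a nonpositive integer and the termwise power rule applies throughout. The one genuinely delicate point --- that $\nabla^{N}$ at $t\in\mathbb{N}_{a+1}$ requires values of the fractional sum at points at or below $a$, so the formula on $\mathbb{N}_{a+1}$ only makes sense after extending by the empty-sum convention (under which $\nabla g_{\gamma}=g_{\gamma-1}$ holds trivially for $t\leq a$ as $0=0$) --- is exactly the point your closing remark addresses, and with it the $N$-fold iteration goes through on all of $\mathbb{N}_{a+1}$.
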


We observe the following properties of gamma and generalized rising functions.
\begin{thm} \cite{J} \label{Gamma}
Assume the following gamma and generalized rising functions are well defined.
\begin{enumerate}
\item $\Gamma(t) > 0$ for all $t > 0$;
\item $t^{\overline\alpha}(t + \alpha)^{\overline \beta} = t^{\overline {\alpha + \beta}}$;
\item If $t \leq r$  then $t^{\overline{\alpha}} \leq r^{\overline{\alpha}}$;
\item If $\alpha < t \leq r$ then $r^{\overline{-\alpha}} \leq t^{\overline{-\alpha}}$;
\item $\nabla(t + \alpha)^{\overline {\beta}} = \beta(t + \alpha)^{\overline {\beta - 1}}$;
\item $\nabla(\alpha - t)^{\overline {\beta}} = -\beta(\alpha - \rho(t))^{\overline {\beta - 1}}$.
\end{enumerate}
\end{thm}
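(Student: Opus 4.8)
The plan is to verify the six items in turn, grouping them by the method involved: (1), (2), (5) and (6) are purely computational, resting only on the integral definition of $\Gamma$ and the reduction formula $\Gamma(z+1)=z\Gamma(z)$, while (3) and (4) are monotonicity statements that need one additional analytic fact.

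For (1), I would observe that for real $t>0$ the integrand $s^{t-1}e^{-s}$ is strictly positive on $(0,\infty)$, so $\Gamma(t)=\int_0^\infty s^{t-1}e^{-s}\,ds>0$. For (2), when $t\neq 0$ I would substitute the definition $t^{\overline r}=\Gamma(t+r)/\Gamma(t)$ and cancel (legitimate since $\Gamma$ has no zeros and, by hypothesis, no poles on the arguments involved),
\[
t^{\overline\alpha}(t+\alpha)^{\overline\beta}=\frac{\Gamma(t+\alpha)}{\Gamma(t)}\cdot\frac{\Gamma(t+\alpha+\beta)}{\Gamma(t+\alpha)}=\frac{\Gamma(t+\alpha+\beta)}{\Gamma(t)}=t^{\overline{\alpha+\beta}},
\]
and check separately that for $t=0$ both sides vanish by the convention $0^{\overline r}=0$. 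For (5), I would expand the nabla difference and factor out the common gamma factor,
\[
\nabla(t+\alpha)^{\overline\beta}=\frac{\Gamma(t+\alpha+\beta)}{\Gamma(t+\alpha)}-\frac{\Gamma(t+\alpha+\beta-1)}{\Gamma(t+\alpha-1)}=\frac{\Gamma(t+\alpha+\beta-1)}{\Gamma(t+\alpha)}\big[(t+\alpha+\beta-1)-(t+\alpha-1)\big],
\]
which collapses to $\beta(t+\alpha)^{\overline{\beta-1}}$ after one more application of $\Gamma(z+1)=z\Gamma(z)$. Item (6) is the analogous finite-difference computation for the argument $\alpha-t$; the backward jump $\rho$ enters simply because $\nabla$ samples the argument at $t$ and $t-1$ and $\alpha-(t-1)=\alpha-\rho(t)$ for $t$ in the interior of the domain.

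For (3) and (4) I would pass through the digamma function $\psi=\Gamma'/\Gamma$. Assuming the arguments lie in $(0,\infty)$ so the rising functions are positive, write $t^{\overline\alpha}=\exp\big(\ln\Gamma(t+\alpha)-\ln\Gamma(t)\big)$ and differentiate in $t$ to obtain $\big(\psi(t+\alpha)-\psi(t)\big)\,t^{\overline\alpha}$. Since $\psi'(t)=\sum_{k\ge 0}(t+k)^{-2}>0$, the digamma function is strictly increasing on $(0,\infty)$, so for $\alpha\ge 0$ the bracket is nonnegative, $t\mapsto t^{\overline\alpha}$ is nondecreasing, and (3) follows; applying the same computation to $t^{\overline{-\alpha}}$ gives the bracket $\psi(t-\alpha)-\psi(t)\le 0$ whenever $\alpha>0$ and $t>\alpha$, so $t\mapsto t^{\overline{-\alpha}}$ is nonincreasing, which is (4). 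When $r-t$ is a positive integer, as in every application in this paper, one may instead use (2) to write $r^{\overline\alpha}/t^{\overline\alpha}$ as a finite product of factors $(t+k+\alpha)/(t+k)\ge 1$, avoiding $\psi$ altogether.

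The only real obstacle is keeping track of the hypotheses in (3) and (4): one must guarantee that on the whole interval $[t,r]$ the rising functions stay away from the poles of $\Gamma$ and do not change sign, and that the implicit sign conditions ($\alpha\ge 0$ in (3), $\alpha>0$ in (4)) hold — all of which is exactly what the blanket assumption that "the gamma and generalized rising functions are well defined", together with positivity of the arguments, supplies. Everything else is bookkeeping, and the identities themselves are classical (cf. \cite{J}).
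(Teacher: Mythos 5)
Your proof is correct, but there is nothing in the paper to compare it against: Theorem \ref{Gamma} is stated as a preliminary imported from \cite{J} and is given no proof here. Your verifications of (1), (2), (5) and (6) are the standard one-line computations from the definition $t^{\overline r}=\Gamma(t+r)/\Gamma(t)$ and the reduction formula, and they are accurate (in (6) the factor $\alpha-\rho(t)=\alpha-t+1$ appears exactly as you describe). Your handling of (3) and (4) is the one substantive point, and you get it right: as stated the two items are only consistent under the implicit sign condition $\alpha\ge 0$ (your example-free observation that (3) fails for negative exponents is why (4) exists as a separate item), and either the digamma monotonicity argument or, for the integer increments $r-t\in\mathbb{N}_1$ actually used in this paper, the factorization of $r^{\overline{\alpha}}/t^{\overline{\alpha}}$ into factors $(t+k+\alpha)/(t+k)\ge 1$ settles both. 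The blanket hypothesis that all quantities are well defined, together with the positivity of the arguments arising in Sections 3 and 4 (where these items are applied with exponents $\alpha-1$, $\alpha-2$, etc.\ and arguments of the form $t-a$, $b-s+1$), supplies exactly the conditions your argument needs.
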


\begin{thm} (Generalized Power Rules) \cite{TF} \label{Power Rule} Let $\nu > 0$ and $\mu \in \mathbb{R}$ such that $\mu$, $\mu + \nu$ and $\mu - \nu$ are nonnegative integers. Then,
\begin{align*}
& \nabla_{a}^{-\nu}(t - a)^{\overline{\mu}} = \frac{\Gamma(\mu + 1)}{\Gamma(\mu + \nu + 1)}(t - a)^{\overline{\mu + \nu}}, \quad t \in \mathbb{N}_{a + 1},\\
& \nabla_{a}^{\nu}(t - a)^{\overline{\mu}} = \frac{\Gamma(\mu + 1)}{\Gamma(\mu - \nu + 1)}(t - a)^{\overline{\mu - \nu}}, \quad t \in \mathbb{N}_{a + 1}.
\end{align*}
\end{thm}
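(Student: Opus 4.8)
The plan is to prove the fractional-sum formula first and then to deduce the fractional-difference formula from it. For the sum formula, I would start from the definition of the nabla fractional sum and use that $\rho(s) = s - 1$ for every $s \in \mathbb{N}_{a+1}$, so that
\begin{equation*}
\nabla_a^{-\nu}(t - a)^{\overline{\mu}} = \frac{1}{\Gamma(\nu)}\sum_{s = a+1}^{t}(t - s + 1)^{\overline{\nu - 1}}(s - a)^{\overline{\mu}}, \qquad t \in \mathbb{N}_{a+1}.
\end{equation*}
Rewriting each rising factorial as a generalized binomial coefficient through $x^{\overline{r}} = \Gamma(r + 1)\binom{x + r - 1}{x - 1}$, with $\binom{p}{q} := \frac{\Gamma(p + 1)}{\Gamma(q + 1)\Gamma(p - q + 1)}$, and reindexing by $j := t - s$ and $k := s - a - 1$, so that $j + k = m := t - a - 1$, I would bring the right-hand side to the form
\begin{equation*}
\Gamma(\mu + 1)\sum_{j + k = m}\binom{j + \nu - 1}{j}\binom{k + \mu}{k}.
\end{equation*}
The whole claim then reduces to the Vandermonde-type convolution $\sum_{j + k = m}\binom{j + \nu - 1}{j}\binom{k + \mu}{k} = \binom{m + \mu + \nu}{m}$; expanding the last coefficient with $m = t - a - 1$ returns exactly $(t - a)^{\overline{\mu + \nu}}/\Gamma(\mu + \nu + 1)$, which gives the sum formula.

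This convolution identity is the technical heart of the argument, and it is the step where I expect the only genuine care to be required, since it must be used with the non-integer parameters $\nu$ and $\mu$. I would justify it by reading both sides as the coefficient of $z^{m}$ in the power-series identity $(1 - z)^{-\nu}(1 - z)^{-(\mu + 1)} = (1 - z)^{-(\mu + \nu + 1)}$, valid for $|z| < 1$: this makes it an identity of polynomials in $\mu$ and $\nu$, hence valid for all admissible real values. A naive induction on $t$ does not close here, because taking the nabla difference of the partial sums only relates the order-$\nu$ formula to the order-$(\nu - 1)$ formula, and for non-integer $\nu$ that recursion never terminates; some closed-form input of Vandermonde type is genuinely needed.

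For the fractional-difference formula I would use the Riemann-Liouville definition $\nabla_a^{\nu}u = \nabla^{N}\big(\nabla_a^{-(N - \nu)}u\big)$ with $N - 1 < \nu \le N$. Substituting the sum formula just proved, with order $N - \nu \ge 0$ (the case $N = \nu$ being immediate), gives $\nabla_a^{-(N - \nu)}(t - a)^{\overline{\mu}} = \frac{\Gamma(\mu + 1)}{\Gamma(\mu + N - \nu + 1)}(t - a)^{\overline{\mu + N - \nu}}$; then $N$ successive applications of Theorem~\ref{Gamma}(5) in the form $\nabla(t - a)^{\overline{\beta}} = \beta(t - a)^{\overline{\beta - 1}}$ telescope the trailing factors into $\frac{\Gamma(\mu + N - \nu + 1)}{\Gamma(\mu - \nu + 1)}(t - a)^{\overline{\mu - \nu}}$, and multiplying the two constants cancels $\Gamma(\mu + N - \nu + 1)$ and leaves $\frac{\Gamma(\mu + 1)}{\Gamma(\mu - \nu + 1)}(t - a)^{\overline{\mu - \nu}}$, as claimed. (When $\nu$ is a positive integer this is just the classical integer power rule; otherwise one may instead apply the Vandermonde computation of the first paragraph directly to the alternative sum representation of $\nabla_a^{\nu}$ recorded above, with $\nu - 1$ replaced by $-\nu - 1$ and $\Gamma(\nu)$ by $\Gamma(-\nu)$.) Throughout, the hypotheses on $\mu$, $\mu + \nu$ and $\mu - \nu$ are exactly what keeps every gamma value and rising factorial above well defined, and since all the identities are between closed forms that extend analytically in $t$, the stated ranges of $t$ present no difficulty.
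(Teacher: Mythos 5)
The paper does not prove this theorem: it is quoted verbatim from the reference [TF] (Abdeljawad--Atici) with no argument supplied, so there is no in-paper proof to compare against. Your proposal is a correct, self-contained derivation. The reduction of the fractional sum to the convolution $\sum_{j+k=m}\binom{j+\nu-1}{j}\binom{k+\mu}{k}=\binom{m+\mu+\nu}{m}$ is exactly right (with $m=t-a-1$ the right-hand side unpacks to $(t-a)^{\overline{\mu+\nu}}/\Gamma(\mu+\nu+1)$), the generating-function justification $(1-z)^{-\nu}(1-z)^{-(\mu+1)}=(1-z)^{-(\mu+\nu+1)}$ is valid for arbitrary real parameters, and your correct observation that a naive induction on $t$ cannot close for non-integer $\nu$ is the reason some Vandermonde-type input is unavoidable. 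The passage from the sum rule to the difference rule via $\nabla^{N}\nabla_a^{-(N-\nu)}$ and $N$ applications of $\nabla(t-a)^{\overline{\beta}}=\beta(t-a)^{\overline{\beta-1}}$ telescopes as you say. Two minor remarks. First, the hypothesis as printed (``$\mu$, $\mu+\nu$ and $\mu-\nu$ are nonnegative integers'') would force $\nu$ to be an integer and is surely a misstatement of ``not negative integers''; what the paper actually uses later is the real-parameter version (e.g.\ $\nabla_a^{\alpha-1}(t-a)^{\overline{\alpha-1}}=\Gamma(\alpha)$ with $\mu=\alpha-1$ non-integer), which is precisely what your argument establishes. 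Second, the $N$-fold difference is literally defined only for $t\in\mathbb{N}_{a+N}$, so extending the conclusion to all of $\mathbb{N}_{a+1}$ does rest on the closed-form extension you invoke at the end (or on the alternative summation formula for $\nabla_a^{\nu}$, which you also note); that is the standard convention and is fine.
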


\begin{defn} (Nabla Mittag-Leffler Function) \cite{CH} \label{DML}
For $|p| < 1$, $\alpha > 0$ and $\beta \in \mathbb{R}$, we define the nabla Mittag-Leffler function by $$E_{p, \alpha, \beta}(t, a) = \sum^{\infty}_{k = 0}p^{k}\frac{(t - a)^{\overline{\alpha k + \beta}}}{\Gamma(\alpha k + \beta + 1)}, \quad t \in \mathbb{N}_{a}.$$ Clearly, we have $E_{0, \alpha, 0}(t, a) = 1$ and $E_{p, \alpha, 0}(a, a) = 1$.
\end{defn}

\begin{thm} \cite{CH} \label{FD DML}
Assume $|p| < 1$, $\alpha > 0$ and $\beta \in \mathbb{R}$. Then, $$\nabla^{\nu}_{a}E_{p, \alpha, \beta}(t, a) = E_{p, \alpha, \beta - \nu}(t, a), \quad t \in \mathbb{N}_{a + 1}.$$
\end{thm}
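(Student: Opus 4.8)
The plan is to work directly from the series definition in Definition \ref{DML}, move the operator $\nabla^{\nu}_{a}$ inside the sum, and reduce everything to a single application of the power rule (Theorem \ref{Power Rule}) on each monomial $(t - a)^{\overline{\alpha k + \beta}}$, after which the gamma factors cancel in an obvious way.

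First I would fix $t \in \mathbb{N}_{a + 1}$, so that $t - a$ is a fixed positive integer. For such a $t$, the value $\big(\nabla^{\nu}_{a}u\big)(t)$ is, whether one uses the recursive Riemann-Liouville definition (when $\nu \in \mathbb{N}_{1}$) or the alternative representation $\big(\nabla^{\nu}_{a}u\big)(t) = \frac{1}{\Gamma(-\nu)}\sum_{s = a + 1}^{t}(t - \rho(s))^{\overline{-\nu - 1}}u(s)$ (when $\nu \notin \mathbb{N}_{1}$), a \emph{finite} linear combination $\sum_{s = a + 1}^{t}c_{s}(t)\,u(s)$ whose coefficients depend only on $t$, $s$ and $\nu$. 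Hence, once the Mittag-Leffler series is seen to converge absolutely at each of the finitely many points $s \in \mathbb{N}_{a + 1}^{t}$, one may freely interchange $\nabla^{\nu}_{a}$ with the infinite sum. Absolute convergence at a fixed $s$ follows from $|p| < 1$ together with the identity $(s - a)^{\overline{\alpha k + \beta}}/\Gamma(\alpha k + \beta + 1) = \Gamma(\alpha k + \beta + s - a)/\big(\Gamma(s - a)\Gamma(\alpha k + \beta + 1)\big)$, whose right-hand side grows only polynomially in $k$ (the ratio of the two gamma factors behaves like $(\alpha k)^{s - a - 1}$ as $k \to \infty$), so the $k$-th term is dominated by $|p|^{k}$ times a polynomial in $k$.

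Next I would differentiate term by term. By the (general, real-exponent form of the) power rule in Theorem \ref{Power Rule},
\begin{equation*}
\nabla^{\nu}_{a}(t - a)^{\overline{\alpha k + \beta}} = \frac{\Gamma(\alpha k + \beta + 1)}{\Gamma(\alpha k + \beta - \nu + 1)}(t - a)^{\overline{\alpha k + \beta - \nu}},
\end{equation*}
so the factor $\Gamma(\alpha k + \beta + 1)$ cancels the denominator appearing in the definition of $E_{p, \alpha, \beta}$. Summing over $k$ then leaves precisely $\sum_{k = 0}^{\infty}p^{k}(t - a)^{\overline{\alpha k + \beta - \nu}}/\Gamma(\alpha k + \beta - \nu + 1) = E_{p, \alpha, \beta - \nu}(t, a)$, which is the asserted identity on $\mathbb{N}_{a + 1}$.

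The main obstacle I anticipate is essentially one of bookkeeping rather than of substance: (i) justifying rigorously the interchange of the discrete operator with the infinite series, which is handled as above by observing that $\nabla^{\nu}_{a}$ acts as a finite sum of function values at each fixed $t$; and (ii) reconciling the hypotheses of Theorem \ref{Power Rule} — stated for integer exponents — with the generally non-integer exponents $\alpha k + \beta$ occurring here, which forces one to invoke (or re-derive, using the alternative definition of $\nabla^{\nu}_{a}$ together with property (2) of Theorem \ref{Gamma}) the version of the power rule valid for arbitrary real $\mu$. One should also note the degenerate indices $k$ for which $\alpha k + \beta - \nu + 1$ is a non-positive integer: there $1/\Gamma(\alpha k + \beta - \nu + 1) = 0$ and the corresponding term drops out on both sides, so the identity is unaffected.
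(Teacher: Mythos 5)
The paper does not prove this statement; it is quoted from the reference \cite{CH}, so there is no internal proof to compare against. Your argument --- termwise application of the real-exponent power rule to the defining series, with the interchange justified because $\nabla^{\nu}_{a}$ at a fixed $t\in\mathbb{N}_{a+1}$ is a finite linear combination of the values $u(a+1),\dots,u(t)$ and the series converges absolutely there --- is correct and is essentially the standard proof given in that reference. Your one caveat is the right one to flag: Theorem \ref{Power Rule} as printed in the paper is restricted to integer exponents and so does not literally cover $\mu=\alpha k+\beta$, but the general real-exponent power rule (with the convention $1/\Gamma(\text{nonpositive integer})=0$ absorbing the degenerate indices) is true and is what must be invoked, exactly as you say.
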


\begin{thm} \cite{CH} \label{Difference}
Let $\nu$, $\mu > 0$ and $u : \mathbb{N}_{a} \rightarrow \mathbb{R}$. Then, $$\Big{(}\nabla^{\nu}_{a}\big{(}\nabla^{-\mu}_{a}u\big{)}\Big{)}(t) = \big{(}\nabla^{\nu - \mu}_{a}u\big{)}(t), \quad t \in \mathbb{N}_{a + 1}.$$
\end{thm}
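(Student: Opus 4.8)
The plan is to peel the definition of $\nabla^{\nu}_{a}$ down to an ordinary integer-order difference acting on a single fractional sum, to collapse the two fractional sums that appear along the way, and finally to re-read the result as $\nabla^{\nu - \mu}_{a}u$ by a short case distinction. Two auxiliary facts do all the work. The first is the index law for nabla fractional sums,
$$\big(\nabla^{-\mu}_{a}(\nabla^{-\nu}_{a}v)\big)(t) = \big(\nabla^{-(\mu + \nu)}_{a}v\big)(t), \qquad t \in \mathbb{N}_{a + 1},$$
valid for $\mu, \nu > 0$ and extended trivially to $\mu = 0$ or $\nu = 0$ through $\nabla^{-0}_{a} = \mathrm{Id}$. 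I would prove it by observing that, for fixed $t$, both sides are linear in $v|_{\{a + 1, \dots, t\}}$, and that the rising monomials $(s - a)^{\overline{j}}$ for $0 \leq j \leq t - a - 1$ span that finite-dimensional space (as polynomials in $s$ they run through all degrees up to $t - a - 1$); on such a monomial the asserted identity reduces to two applications of the power rule (Theorem \ref{Power Rule}) followed by the telescoping of gamma quotients $\frac{\Gamma(j + 1)}{\Gamma(j + \nu + 1)}\cdot\frac{\Gamma(j + \nu + 1)}{\Gamma(j + \nu + \mu + 1)} = \frac{\Gamma(j + 1)}{\Gamma(j + \mu + \nu + 1)}$. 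Alternatively one may evaluate the defining double sum directly, interchange the order of summation, and invoke the Chu--Vandermonde-type identity $\sum_{s = r}^{t}(t - \rho(s))^{\overline{\mu - 1}}(s - \rho(r))^{\overline{\nu - 1}} = \frac{\Gamma(\mu)\Gamma(\nu)}{\Gamma(\mu + \nu)}(t - \rho(r))^{\overline{\mu + \nu - 1}}$, a standard fact for rising factorials that follows by induction on $t$ using Theorem \ref{Gamma}. The second fact is the integer-order fundamental theorem: for $N \in \mathbb{N}_{1}$, $\big(\nabla^{N}(\nabla^{-N}_{a}v)\big)(t) = v(t)$ on $\mathbb{N}_{a + 1}$, which follows by iterating the telescoping identity $\big(\nabla(\nabla^{-1}_{a}v)\big)(t) = \nabla\big(\sum_{s = a + 1}^{t}v(s)\big)(t) = v(t)$ and writing $\nabla^{-N}_{a} = \nabla^{-1}_{a}\circ\cdots\circ\nabla^{-1}_{a}$ by the index law. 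Combining the two yields, for integers $M \geq 1$ and $k \geq 0$, that $\big(\nabla^{M}(\nabla^{-k}_{a}v)\big)(t)$ equals $\big(\nabla^{-(k - M)}_{a}v\big)(t)$ when $M \leq k$ and $\big(\nabla^{M - k}v\big)(t)$ when $M > k$.

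With these in hand I would fix $N \in \mathbb{N}_{1}$ with $N - 1 < \nu \leq N$. By the definition of $\nabla^{\nu}_{a}$ and then the index law applied to the inner pair of operators (legitimate because $N - \nu \geq 0$, the subcase $N - \nu = 0$ being trivial),
$$\big(\nabla^{\nu}_{a}(\nabla^{-\mu}_{a}u)\big)(t) = \Big(\nabla^{N}\big(\nabla^{-(N - \nu)}_{a}(\nabla^{-\mu}_{a}u)\big)\Big)(t) = \big(\nabla^{N}(\nabla^{-\gamma}_{a}u)\big)(t),$$
where $\gamma := N - \nu + \mu > 0$; note $N - \gamma = \nu - \mu$. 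I would then write $\gamma = k + \gamma_{0}$ with $k = \lfloor\gamma\rfloor \in \mathbb{N}_{0}$ and $\gamma_{0} = \gamma - k \in [0, 1)$, and split $\nabla^{-\gamma}_{a}u = \nabla^{-k}_{a}(\nabla^{-\gamma_{0}}_{a}u)$ via the index law. If $k \geq N$, the integer-order fundamental theorem gives
$$\big(\nabla^{N}(\nabla^{-\gamma}_{a}u)\big)(t) = \big(\nabla^{-(k - N)}_{a}(\nabla^{-\gamma_{0}}_{a}u)\big)(t) = \big(\nabla^{-(k - N + \gamma_{0})}_{a}u\big)(t) = \big(\nabla^{-(\gamma - N)}_{a}u\big)(t),$$
which is exactly $\nabla^{\nu - \mu}_{a}u$ read as the fractional sum of order $\mu - \nu \geq 0$. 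If $k < N$, the same theorem gives $\big(\nabla^{N}(\nabla^{-\gamma}_{a}u)\big)(t) = \big(\nabla^{N - k}(\nabla^{-\gamma_{0}}_{a}u)\big)(t)$, and here $N - k \geq 1$: when $\gamma_{0} = 0$ this is $\big(\nabla^{N - k}u\big)(t) = \big(\nabla^{\nu - \mu}_{a}u\big)(t)$; when $\gamma_{0} \in (0, 1)$ one has $N - k - 1 < (N - k) - \gamma_{0} < N - k$, so $N - k$ is precisely the integer that the definition of $\nabla^{(N - k) - \gamma_{0}}_{a}$ attaches, and that definition reads $\nabla^{(N - k) - \gamma_{0}}_{a}u = \nabla^{N - k}(\nabla^{-\gamma_{0}}_{a}u)$; since $(N - k) - \gamma_{0} = N - \gamma = \nu - \mu$, this is again $\nabla^{\nu - \mu}_{a}u$. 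That would establish the identity for every $t \in \mathbb{N}_{a + 1}$.

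The delicate point is precisely this final index matching: one must check that the integer $N$ forced by $N - 1 < \nu \leq N$ is consistent with the integer that the definition of $\nabla^{\nu - \mu}_{a}$ selects, and that the degenerate configurations ($\nu = \mu$, $\nu = N$, $\gamma_{0} = 0$, $k = 0$) all collapse correctly through the conventions $\nabla^{-0}_{a} = \mathrm{Id}$ and $\nabla^{M}_{a} = \nabla^{M}$ for $M \in \mathbb{N}_{1}$. Everything else is routine manipulation of finite sums, granted the two auxiliary facts; of these, the index law for fractional sums is the one requiring genuine (though standard) work.
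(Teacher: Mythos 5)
The paper offers no proof of this statement at all: Theorem \ref{Difference} is imported verbatim from the reference \cite{CH} (Goodrich--Peterson), so there is nothing in the text to compare your argument against. Judged on its own, your proof is correct and is essentially the standard argument from that reference: reduce $\nabla^{\nu}_{a}\nabla^{-\mu}_{a}$ to $\nabla^{N}\nabla^{-\gamma}_{a}$ with $\gamma = N - \nu + \mu$ via the semigroup law for fractional sums, dispose of the integer part of $\gamma$ with the fundamental theorem $\nabla^{N}\nabla^{-N}_{a} = \mathrm{Id}$, and then match the residual expression against the definition of $\nabla^{\nu-\mu}_{a}$ by a case split on the sign of $\nu - \mu$ and on whether $\gamma$ is an integer. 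Your index bookkeeping at the end (checking that $N - k$ is exactly the integer the definition of $\nabla^{(N-k)-\gamma_{0}}_{a}$ selects, and that $k \geq N$ occurs precisely when $\mu \geq \nu$ so the answer is read as a sum of order $\mu - \nu$) is the genuinely delicate part and you handle it correctly, including the degenerate cases $\nu = \mu$ and $\nu - \mu \in \mathbb{N}_{1}$. Two small cautions. First, of your two proposed proofs of the index law $\nabla^{-\mu}_{a}\nabla^{-\nu}_{a} = \nabla^{-(\mu+\nu)}_{a}$, the basis-expansion version quietly needs the power rule for a non-integer exponent $j + \nu$ on the second application, which is stronger than the literal hypotheses of Theorem \ref{Power Rule} as printed in this paper (those hypotheses are themselves garbled); the direct interchange of summation with the Chu--Vandermonde identity is the safe route and is the one you should keep. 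Second, the paper's definition places $\nabla^{\nu}_{a}v$ on $\mathbb{N}_{a+N}$ while the identity is asserted on $\mathbb{N}_{a+1}$; this domain mismatch is inherited from the paper itself and is resolved by the empty-sum convention, but it deserves one sentence rather than silence.
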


\begin{thm} \cite{CH} \label{Sum}
Assume $\nu > 0$ and $N - 1 < \nu \leq N$. Then, a general solution of $$\big{(}\nabla^{\nu}_{a}u\big{)}(t) = 0, \quad t \in \mathbb{N}_{a + N},$$ is given by $$u(t) = C_{1}(t - a)^{\overline{\nu - 1}} + C_{2}(t - a)^{\overline{\nu - 2}} + \cdots + C_{N}(t - a)^{\overline{\nu - N}}, \quad t \in \mathbb{N}_{a + 1},$$ where $C_{1}, C_{2}, \cdots, C_{N} \in \mathbb{R}$.
\end{thm}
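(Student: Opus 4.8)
\emph{Proof proposal.} The plan is to prove the two inclusions separately: that every function of the stated form solves the homogeneous equation on $\mathbb{N}_{a+N}$, and conversely that every solution of the equation has that form; together these identify the solution set with $\mathrm{span}\{(t-a)^{\overline{\nu-1}},\ldots,(t-a)^{\overline{\nu-N}}\}$, which is exactly the assertion.

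For the first (``sufficiency'') inclusion I would compute $\nabla^{\nu}_{a}$ on each monomial $(t-a)^{\overline{\nu-j}}$, $j=1,2,\ldots,N$. One may formally invoke the power rule of Theorem \ref{Power Rule}, but to avoid its integrality restrictions on $\mu$ it is cleaner to read off the needed power rule from Theorem \ref{FD DML} with $p=0$: since only the $k=0$ term of Definition \ref{DML} survives, $E_{0,\alpha,\beta}(t,a)=(t-a)^{\overline{\beta}}/\Gamma(\beta+1)$, and Theorem \ref{FD DML} then yields $\nabla^{\nu}_{a}(t-a)^{\overline{\beta}}=\frac{\Gamma(\beta+1)}{\Gamma(\beta-\nu+1)}(t-a)^{\overline{\beta-\nu}}$ whenever $\Gamma(\beta+1)$ is finite. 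Taking $\beta=\nu-j$ gives $\nabla^{\nu}_{a}(t-a)^{\overline{\nu-j}}=\frac{\Gamma(\nu-j+1)}{\Gamma(1-j)}(t-a)^{\overline{-j}}$, and since $1-j\in\{0,-1,\ldots,-(N-1)\}$ is a pole of $\Gamma$, the reciprocal $1/\Gamma(1-j)$ vanishes, so the expression is $0$ on the required range. Linearity of $\nabla^{\nu}_{a}$ then shows $\sum_{j=1}^{N}C_{j}(t-a)^{\overline{\nu-j}}$ solves $(\nabla^{\nu}_{a}u)(t)=0$ on $\mathbb{N}_{a+N}$.

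For the converse (``completeness'') inclusion I would use the definition $\nabla^{\nu}_{a}u=\nabla^{N}\big(\nabla^{-(N-\nu)}_{a}u\big)$ directly. Put $w:=\nabla^{-(N-\nu)}_{a}u$; then $\nabla^{\nu}_{a}u=0$ is equivalent to $\nabla^{N}w=0$. By Theorem \ref{Gamma}(5), $\nabla(t-a)^{\overline{k}}=k(t-a)^{\overline{k-1}}$, whence $\nabla^{N}(t-a)^{\overline{k}}=0$ for $k=0,1,\ldots,N-1$; as these $N$ rising factorials are polynomials in $t$ of distinct degrees they are linearly independent, and the kernel of the $N$th-order operator $\nabla^{N}$ is $N$-dimensional, so $w(t)=\sum_{k=0}^{N-1}b_{k}(t-a)^{\overline{k}}$ for constants $b_{k}$. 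Applying $\nabla^{N-\nu}_{a}$ and using Theorem \ref{Difference} with both orders equal to $N-\nu$ (reading $\nabla^{0}_{a}$ as the identity) recovers $u=\nabla^{N-\nu}_{a}w$; expanding $\nabla^{N-\nu}_{a}(t-a)^{\overline{k}}=\frac{\Gamma(k+1)}{\Gamma(k-N+\nu+1)}(t-a)^{\overline{\nu-(N-k)}}$ via the same $p=0$ power rule and letting $j=N-k$ run through $N,N-1,\ldots,1$ exhibits $u$ as a linear combination of $(t-a)^{\overline{\nu-1}},\ldots,(t-a)^{\overline{\nu-N}}$. Relabelling the coefficients gives the stated general solution.

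The step I expect to be the main obstacle is the completeness direction: pinning down, with the precise conventions of the nabla sum (empty sums equal $0$, the function domains $\mathbb{N}_{a+1}$ versus $\mathbb{N}_{a}$, and exactly where $\nabla^{N}w=0$ is imposed), that the kernel of $\nabla^{N}$ on the relevant space is genuinely $N$-dimensional with basis $\{(t-a)^{\overline{k}}\}_{k=0}^{N-1}$, and that the composition $\nabla^{N-\nu}_{a}\nabla^{-(N-\nu)}_{a}$ really acts as the identity on $u$. A minor but worthwhile point in the sufficiency direction is that $\frac{1}{\Gamma(1-j)}(t-a)^{\overline{-j}}$ is of the indeterminate shape $0\cdot\infty$ for small $t$, so one should either restrict to $t\in\mathbb{N}_{a+N}$ at the outset or first rewrite $(t-a)^{\overline{-j}}/\Gamma(1-j)$ as $(1-j)^{\overline{\,t-a-1}}/\Gamma(t-a)$ to see it truly vanishes on the required range.
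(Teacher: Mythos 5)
First, a point of comparison: the paper does not prove Theorem \ref{Sum} at all --- it is quoted from \cite{CH} as a known result, and the paper only ever uses the ``completeness'' half (every solution has the stated form), for $N=2$, when it writes down \eqref{Sol 1}. So your proposal is being measured against the standard textbook argument rather than anything in this paper. Your architecture (show each $(t-a)^{\overline{\nu-j}}$ is annihilated; conversely set $w=\nabla^{-(N-\nu)}_{a}u$, solve $\nabla^{N}w=0$, and apply $\nabla^{N-\nu}_{a}$) is indeed that standard argument, and the trick of extracting the non-integer power rule from Theorem \ref{FD DML} with $p=0$ is a legitimate way around the integrality hypotheses of Theorem \ref{Power Rule}.

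The genuine gap sits exactly where you call the issue ``minor''. Your identity $(t-a)^{\overline{-j}}/\Gamma(1-j)=(1-j)^{\overline{t-a-1}}/\Gamma(t-a)$ is correct, but the product $(1-j)(2-j)\cdots(t-a-1-j)$ contains the factor $0$ only when $t\geq a+j+1$; at $t=a+j$ it equals $(-1)^{j-1}(j-1)!$, so the quotient is $(-1)^{j-1}\neq 0$. Since $j$ runs up to $N$ and the equation is imposed on $\mathbb{N}_{a+N}$, the case $j=N$, $t=a+N$ lies inside the required range, and there your expression does \emph{not} vanish: one gets $\nabla^{\nu}_{a}(t-a)^{\overline{\nu-N}}\big|_{t=a+N}=(-1)^{N-1}\Gamma(\nu-N+1)$. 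A direct check confirms this (take $\nu=3/2$, $N=2$: $\nabla^{-1/2}_{a}(t-a)^{\overline{-1/2}}\equiv\Gamma(1/2)$ on $\mathbb{N}_{a+1}$ while the empty sum gives $0$ at $t=a$, so $\nabla^{2}$ of it at $t=a+2$ equals $-\Gamma(1/2)\neq 0$). So the assertion that the rewriting shows the expression ``truly vanishes on the required range'' is false for $j=N$; your sufficiency argument only delivers the equation on $\mathbb{N}_{a+N+1}$. This left-endpoint, off-by-one defect is a known wart of the nabla conventions (under the paper's own definitions $\nabla^{-(N-\nu)}_{a}u$ lives on $\mathbb{N}_{a+1}$, so $\nabla^{N}$ of it is not even defined before $t=a+N+1$), and a complete proof must either adjust the domain or treat $t=a+N$ separately under the convention $0^{\overline{r}}:=0$. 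The same bookkeeping is what you defer in the completeness direction: $\ker\nabla^{N}$ on $\mathbb{N}_{a+N}$ is parametrized by the initial values $w(a),\dots,w(a+N-1)$, but $w(a)$ is an empty sum (hence $0$) while $(t-a)^{\overline{0}}$ is also forced to be $0$ at $t=a$, so the claimed basis does not lie in $\ker\nabla^{N}$ at $t=a+N$ either; identifying the kernel and verifying that $\nabla^{N-\nu}_{a}\nabla^{-(N-\nu)}_{a}=\mathrm{id}$ is precisely the content you label ``the main obstacle'' and leave undone. In short: right strategy, but the two endpoint issues you acknowledge are not cosmetic --- one of them contradicts a claim you make, and neither is closed.
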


\begin{thm} \cite{CH} \label{SUM ML}
Assume $\nu > 0$, $N - 1 < \nu \leq N$ and $|c| < 1$. Then, a general solution of $$\big{(}\nabla^{\nu}_{a}u\big{)}(t) + c u(t) = 0, \quad t \in \mathbb{N}_{a + N},$$ is given by $$u(t) = C_{1}E_{-c, \nu, \nu - 1}(t, a) + C_{2}E_{-c, \nu, \nu - 2}(t, a) + \cdots + C_{N}E_{-c, \nu, \nu - N}(t, a), \quad t \in \mathbb{N}_{a + 1},$$ where $C_{1}, C_{2}, \cdots, C_{N} \in \mathbb{R}$.
\end{thm}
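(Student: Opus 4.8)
The plan is to show that the $N$ functions $u_j(t):=E_{-c,\nu,\nu-j}(t,a)$, $j=1,\dots,N$, are linearly independent solutions of the equation and that the solution space has dimension exactly $N$, so that they constitute a fundamental system and hence every solution is one of the stated combinations. Observe first that $|c|<1$ is precisely what makes each $u_j$ well defined, by Definition \ref{DML}. The three steps are then: (i) verify $(\nabla^\nu_a u_j)(t)+cu_j(t)=0$; (ii) verify linear independence of $u_1,\dots,u_N$; (iii) verify that the solution space is $N$-dimensional.

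For (i): applying Theorem \ref{FD DML} with $p=-c$, $\alpha=\nu$, $\beta=\nu-j$ gives $(\nabla^\nu_a u_j)(t)=E_{-c,\nu,-j}(t,a)$. In the defining series of $E_{-c,\nu,-j}(t,a)$, re-index $k\mapsto k+1$ in the terms with $k\geq 1$: the resulting tail is exactly $-c\,E_{-c,\nu,\nu-j}(t,a)=-c\,u_j(t)$, and the lone remaining term is the $k=0$ term $(t-a)^{\overline{-j}}/\Gamma(-j+1)$. Since $j\in\mathbb{N}_1$, the number $1-j$ is a non-positive integer, so $\Gamma(1-j)$ has a pole and this term is $0$ on the relevant domain; hence $(\nabla^\nu_a u_j)(t)=-c\,u_j(t)$. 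This is the nabla analogue of the fact that, modulo the $(t-a)^{\overline{\nu-k}}$-type terms annihilated by $\nabla^\nu_a$ (cf. Theorem \ref{Sum}), the Mittag-Leffler function reproduces itself under $\nabla^\nu_a$ up to the factor $-c$.

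For (ii): the $k=0$ term of $u_j$ is $(t-a)^{\overline{\nu-j}}/\Gamma(\nu-j+1)$, and by Theorem \ref{Sum} the functions $(t-a)^{\overline{\nu-1}},\dots,(t-a)^{\overline{\nu-N}}$ are linearly independent; evaluating a putative relation $\sum_{j=1}^N C_j u_j\equiv 0$ at $t=a+1,\dots,a+N$ (equivalently, showing the discrete Casoratian of $u_1,\dots,u_N$ at these points is nonzero) forces all $C_j=0$. For (iii): apply $\nabla_a^{-\nu}$ to the equation and use Theorem \ref{Difference} together with the composition identity $\big(\nabla_a^{-\nu}\nabla^\nu_a u\big)(t)=u(t)-\sum_{k=1}^N b_k(t-a)^{\overline{\nu-k}}$ (constants $b_k$ depending on $u$, as in the reasoning behind Theorem \ref{Sum}); this converts the equation into the equivalent Volterra-type summation equation $u(t)=-c\big(\nabla_a^{-\nu}u\big)(t)+\sum_{k=1}^N C_k(t-a)^{\overline{\nu-k}}$, and conversely every such $u$ solves the original equation (apply $\nabla^\nu_a$, using Theorem \ref{Difference} and $\nabla^\nu_a(t-a)^{\overline{\nu-k}}=0$). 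Solving the summation equation by successive approximation, with the fractional sum of each iterate computed from the power rule in Theorem \ref{Power Rule}, reproduces the series $\sum_{k=1}^N C_k E_{-c,\nu,\nu-k}(t,a)$ in the limit and simultaneously shows that the solution is pinned down by the $N$ constants $C_k$, so the solution space is $N$-dimensional.

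I expect step (iii) to be the main obstacle. One must (a) establish the composition/boundary-term identity for $\nabla_a^{-\nu}\nabla^\nu_a$ with exactly $N$ terms, tracking carefully the domains on which $\nabla^\nu_a$ and $\nabla_a^{-\nu}$ operate, and (b) justify the convergence of the successive-approximation series together with the term-by-term application of $\nabla_a^{-\nu}$ — and this is exactly where $|c|<1$ is needed (for the ML series via Definition \ref{DML}, and via $1+c\neq 0$ for the recursion). A shortcut for (b) is to argue the dimension count directly: prescribing a solution is equivalent to prescribing the $N$ numbers playing the role of $C_1,\dots,C_N$, the resulting point-by-point recursion being solvable because $1+c\neq 0$, after which steps (i)--(ii) identify $\{E_{-c,\nu,\nu-k}(t,a)\}_{k=1}^N$ as a basis.
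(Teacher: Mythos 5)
First, a point of orientation: the paper gives no proof of this statement at all --- Theorem \ref{SUM ML} is quoted from Goodrich and Peterson \cite{CH} as a known result --- so there is no in-text argument to compare yours against. Your architecture (exhibit the $N$ Mittag-Leffler functions as particular solutions, prove their linear independence, show the solution space has dimension $N$) is the natural one, and the index shift $k\mapsto k+1$ in step (i), which turns the tail of $E_{-c,\nu,-j}$ into $-c\,E_{-c,\nu,\nu-j}$, is exactly the standard computation.

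That said, the proposal has genuine gaps beyond the one you flag yourself. In step (ii), the suggested verification of linear independence does not work as stated: since $1^{\overline{r}}=\Gamma(1+r)$, every term of $u_j(a+1)$ equals $(-c)^k$, so $u_j(a+1)=\sum_{k\ge 0}(-c)^k=1/(1+c)$ for \emph{every} $j$; the first row of your Casorati matrix is constant, and the nonvanishing of its determinant is a real computation you have not done. (A workable substitute: the lowest-order rising powers $(t-a)^{\overline{\nu-N}},\dots,(t-a)^{\overline{\nu-1}}$ occur one per $u_j$ and are not contaminated by the $k\ge1$ terms, because $2\nu-N>\nu-1$ when $\nu>N-1$; but this needs to be said and justified.) In step (i), the discarded $k=0$ term $(t-a)^{\overline{-j}}/\Gamma(1-j)$ is not simply ``finite over a pole'': for $t\in\mathbb{N}_{a+1}^{a+j}$ the factor $(t-a)^{\overline{-j}}=\Gamma(t-a-j)/\Gamma(t-a)$ has a compensating pole, and under the limit convention the term equals $(-1)^{t-a-1}\binom{j-1}{t-a-1}\neq 0$ there; for $j=N$ this touches the very first point $t=a+N$ of the domain on which the equation is posed, so you must either invoke an explicit convention that annihilates these Taylor monomials or adjust the domain. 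Finally, step (iii) is a plan, not a proof: the composition identity $\nabla_a^{-\nu}\nabla_a^{\nu}u=u-\sum_{k=1}^{N}b_k(t-a)^{\overline{\nu-k}}$, the convergence of the successive approximations, and the termwise application of $\nabla_a^{-\nu}$ are all left unestablished --- as you acknowledge, this is the main obstacle, and the shortcut you propose (count free initial values in the Volterra recursion, using $1+c\neq 0$) is the right idea but forces exactly the $\mathbb{N}_a$ versus $\mathbb{N}_{a+1}$ domain bookkeeping that the step (i) issue already exposes.
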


\section{Left-Focal Type Boundary Value Problem}

In this section, we derive a few important properties of the Green's function for a left-focal type discrete boundary value problem and obtain the corresponding Lyapunov-type inequality.

\begin{thm}
Let $1 < \alpha < 2$ and $h : \mathbb{N}^{b}_{a + 1} \rightarrow \mathbb{R}$. The discrete fractional boundary value problem
\begin{equation} \label{LF Theorem 1}
\begin{cases}
\big{(}\nabla^{\alpha}_{a}u\big{)}(t) + h(t) = 0, \quad t \in \mathbb{N}^{b}_{a + 2}, \\ \big{(}\nabla^{\alpha - 1}_{a} u\big{)}(a + 1) = 0, ~ u(b) = 0,
\end{cases}
\end{equation}
has the unique solution 
\begin{equation} \label{LF Solution}
u(t) = \sum^{b}_{s = a + 2}G_{l}(t, s)h(s), \quad t \in \mathbb{N}^{b}_{a + 1},
\end{equation}
where
\begin{equation} \label{LF Green}
G_{l}(t, s) = \begin{cases}
\frac{1}{\Gamma(\alpha)}\frac{(b - s + 1)^{\overline{\alpha - 1}}(t - a)^{\overline{\alpha - 2}}}{(b - a)^{\overline{\alpha - 2}}}, \hspace{108pt} t \in \mathbb{N}^{\rho(s)}_{a + 1},\\
\frac{1}{\Gamma(\alpha)}\big{[}\frac{(b - s + 1)^{\overline{\alpha - 1}}(t - a)^{\overline{\alpha - 2}}}{(b - a)^{\overline{\alpha - 2}}} - (t - s + 1)^{\overline{\alpha - 1}}\big{]}, \quad t \in \mathbb{N}^{b}_{s}.
\end{cases}
\end{equation}
\end{thm}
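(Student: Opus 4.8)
The plan is to solve the boundary value problem \eqref{LF Theorem 1} directly by fractional summation and then identify the kernel. First I would apply the $\alpha$-th order nabla sum operator $\nabla^{-\alpha}_{a}$ to both sides of the equation $\big{(}\nabla^{\alpha}_{a}u\big{)}(t) + h(t) = 0$. Using Theorem \ref{Sum}, the general solution of $\big{(}\nabla^{\alpha}_{a}u\big{)}(t) = -h(t)$ takes the form
\begin{equation*}
u(t) = -\big{(}\nabla^{-\alpha}_{a}h\big{)}(t) + C_{1}(t - a)^{\overline{\alpha - 1}} + C_{2}(t - a)^{\overline{\alpha - 2}}, \quad t \in \mathbb{N}^{b}_{a + 1},
\end{equation*}
for constants $C_{1}, C_{2} \in \mathbb{R}$; here I am using Theorem \ref{Difference} to justify that $\nabla^{\alpha}_{a}$ undoes $\nabla^{-\alpha}_{a}$ and Theorem \ref{Power Rule} (generalized power rule) to compute the difference of the particular term. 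Writing out the fractional sum explicitly via its definition gives
\begin{equation*}
u(t) = -\frac{1}{\Gamma(\alpha)}\sum^{t}_{s = a + 1}(t - \rho(s))^{\overline{\alpha - 1}}h(s) + C_{1}(t - a)^{\overline{\alpha - 1}} + C_{2}(t - a)^{\overline{\alpha - 2}},
\end{equation*}
and since $h$ is only defined on $\mathbb{N}^{b}_{a+1}$ and the forcing in \eqref{LF Theorem 1} starts at $a+2$, I would note the sum effectively runs from $a+2$ (the $s = a+1$ term drops because it does not appear in the equation; alternatively one extends $h(a+1) := 0$).

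Next I would impose the two boundary conditions to pin down $C_{1}$ and $C_{2}$. For the left-focal condition $\big{(}\nabla^{\alpha - 1}_{a}u\big{)}(a + 1) = 0$: applying $\nabla^{\alpha-1}_{a}$ term by term, Theorem \ref{Difference} gives $\nabla^{\alpha-1}_{a}\big{(}\nabla^{-\alpha}_{a}h\big{)}(t) = \big{(}\nabla^{-1}_{a}h\big{)}(t)$, which vanishes at $t = a+1$ only up to the single summand; the generalized power rule gives $\nabla^{\alpha-1}_{a}(t-a)^{\overline{\alpha-1}} = \Gamma(\alpha)(t-a)^{\overline{0}} = \Gamma(\alpha)$ and $\nabla^{\alpha-1}_{a}(t-a)^{\overline{\alpha-2}} = \Gamma(\alpha-1)(t-a)^{\overline{-1}}$, which is $0$ when evaluated appropriately at $t = a+1$ (since $(t-a)^{\overline{-1}} = \Gamma(t-a-1)/\Gamma(t-a)$ blows up/vanishes in the limiting sense — I will need to be careful here and use the convention $1^{\overline{-1}}$, evaluating $\big{(}\nabla^{\alpha-1}_{a}(t-a)^{\overline{\alpha-2}}\big{)}(a+1)$ via the alternative sum definition). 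Carrying this through forces $C_{1} = 0$. Then the condition $u(b) = 0$ determines $C_{2}$ explicitly as
\begin{equation*}
C_{2} = \frac{1}{\Gamma(\alpha)(b - a)^{\overline{\alpha - 2}}}\sum^{b}_{s = a + 2}(b - \rho(s))^{\overline{\alpha - 1}}h(s) = \frac{1}{\Gamma(\alpha)(b - a)^{\overline{\alpha - 2}}}\sum^{b}_{s = a + 2}(b - s + 1)^{\overline{\alpha - 1}}h(s),
\end{equation*}
using $\rho(s) = s - 1$ for $s \geq a+1$.

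Substituting $C_{1} = 0$ and this value of $C_{2}$ back into the expression for $u(t)$, and then splitting the single finite sum $\sum_{s=a+2}^{t}$ off from $\sum_{s=a+2}^{b}$ according to whether $s \le \rho(t)$ (equivalently $t \ge s$, so that the $(t-s+1)^{\overline{\alpha-1}}$ term is present) or $s > \rho(t)$ (so that term is absent because $(t - \rho(s))^{\overline{\alpha-1}} = 0$ when $t < s$ in the rising-factorial sense, or the index simply exceeds the summation range), I would collect terms into the claimed two-branch form \eqref{LF Green}, with the top branch for $t \in \mathbb{N}^{\rho(s)}_{a+1}$ and the bottom branch for $t \in \mathbb{N}^{b}_{s}$. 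Uniqueness follows because the homogeneous problem has only the trivial solution: by Theorem \ref{Sum} its solutions are spanned by $(t-a)^{\overline{\alpha-1}}$ and $(t-a)^{\overline{\alpha-2}}$, and the two boundary conditions form a nonsingular linear system for $(C_1, C_2)$ — essentially the Wronskian-type determinant is $-\Gamma(\alpha)(b-a)^{\overline{\alpha-2}} \neq 0$. The main obstacle I anticipate is the careful bookkeeping at the boundary $t = a+1$ when applying $\nabla^{\alpha-1}_{a}$ to the term $(t-a)^{\overline{\alpha-2}}$: one must verify rigorously that this contributes nothing to the left-focal condition, which requires handling the generalized rising function with a negative exponent $(t-a)^{\overline{-1}}$ at the left endpoint via the alternative (convolution) definition of the fractional difference rather than naive term-by-term power rules.
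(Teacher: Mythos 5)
Your proposal follows essentially the same route as the paper's proof: apply $\nabla^{-\alpha}_{a}$ and Theorem \ref{Sum} to write $u(t) = -\big(\nabla^{-\alpha}_{a}h\big)(t) + C_{1}(t-a)^{\overline{\alpha-1}} + C_{2}(t-a)^{\overline{\alpha-2}}$, determine $C_{1}$ and $C_{2}$ from the two boundary conditions, and split the sum at $s = t$ to read off the kernel. The only cosmetic difference is the value of $C_{1}$: the paper keeps the $s = a+1$ summand of $\nabla^{-\alpha}_{a}h$ and obtains $C_{1} = h(a+1)/\Gamma(\alpha)$, which then cancels that summand in the final substitution, whereas you set $h(a+1) := 0$ and get $C_{1} = 0$; either way the resulting kernel is \eqref{LF Green}, which does not involve $h(a+1)$.

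The substantive issue is precisely the step you flag as your ``main obstacle,'' and the verification you defer does not come out the way you assert. You propose to check, via the convolution (sum) definition of $\nabla^{\alpha-1}_{a}$, that the term $C_{2}(t-a)^{\overline{\alpha-2}}$ contributes nothing to the left-focal condition at $t = a+1$. Doing that computation for an arbitrary $v$ gives $\big(\nabla^{-(2-\alpha)}_{a}v\big)(a+1) = \frac{1}{\Gamma(2-\alpha)}\,1^{\overline{1-\alpha}}\,v(a+1) = v(a+1)$ and $\big(\nabla^{-(2-\alpha)}_{a}v\big)(a) = 0$ (empty sum), hence $\big(\nabla^{\alpha-1}_{a}v\big)(a+1) = v(a+1)$; with $v(t) = (t-a)^{\overline{\alpha-2}}$ this equals $1^{\overline{\alpha-2}} = \Gamma(\alpha-1) \neq 0$. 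So under the convolution definition the $C_{2}$ term \emph{does} enter the condition $\big(\nabla^{\alpha-1}_{a}u\big)(a+1) = 0$, and ``carrying this through'' does not force $C_{1} = 0$. The paper reaches \eqref{Sol 2} differently, by applying the generalized power rule (Theorem \ref{Power Rule}) formally: $\nabla^{\alpha-1}_{a}(t-a)^{\overline{\alpha-2}} = \frac{\Gamma(\alpha-1)}{\Gamma(0)}(t-a)^{\overline{-1}}$, read as identically zero because $1/\Gamma(0) = 0$ --- a convention that assigns the value $0$ to the indeterminate product $\frac{\Gamma(\alpha-1)}{\Gamma(0)}\cdot\Gamma(0)$ arising at $t = a+1$. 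To arrive at the stated Green's function you must adopt that power-rule convention at the left endpoint; if you instead insist on the convolution definition, as your proposal announces, the boundary condition reduces to $u(a+1) = 0$, which the stated kernel (with $G_{l}(a+1,s) > 0$) does not satisfy. This is not bookkeeping that can be postponed: it is the single point on which the value of $C_{2}$, and hence the whole formula, turns.
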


\begin{proof}
Applying $\nabla^{-\alpha}_{a}$ on both sides of \eqref{LF Theorem 1} and using Theorem \ref{Sum}, we have
\begin{equation} \label{Sol 1}
u(t) = -\big{(}\nabla^{-\alpha}_{a}h\big{)}(t) + C_{1}(t - a)^{\overline{\alpha - 1}} + C_{2}(t - a)^{\overline{\alpha - 2}}, \quad t \in \mathbb{N}_{a + 1},
\end{equation}
for some $C_{1}$, $C_{2} \in \mathbb{R}$. Applying $\nabla^{\alpha - 1}_{a}$ on both sides of \eqref{Sol 1} and using Theorems \ref{Power Rule} - \ref{Difference}, we have
\begin{equation} \label{Sol 2}
\big{(}\nabla^{\alpha - 1}_{a} u\big{)}(t) = -\big{(}\nabla^{-1}_{a}h\big{)}(t) + C_{1}\Gamma(\alpha), \quad t \in \mathbb{N}_{a + 1}.
\end{equation} 
Using $\big{(}\nabla^{\alpha - 1}_{a} u\big{)}(a + 1) = 0$ in \eqref{Sol 2}, we get $$C_{1} = \frac{h(a + 1)}{\Gamma(\alpha)}.$$ Using $u(b) = 0$ in \eqref{Sol 1}, we get $$C_{2} = \frac{1}{(b - a)^{\overline{\alpha - 2}}\Gamma(\alpha)}\sum^{b}_{s = a + 2}(b - s + 1)^{\overline{\alpha - 1}}h(s).$$ Substituting the values of $C_{1}$ and $C_{2}$ in \eqref{Sol 1}, we have
\begin{align*}
u(t) & = -\frac{1}{\Gamma(\alpha)}\sum^{t}_{s = a + 1}(t - s + 1)^{\overline{\alpha - 1}}h(s) + \frac{h(a + 1)}{\Gamma(\alpha)} (t - a)^{\overline{\alpha - 1}} \\ & + \frac{(t - a)^{\overline{\alpha - 2}}}{(b - a)^{\overline{\alpha - 2}}\Gamma(\alpha)}\sum^{b}_{s = a + 2}(b - s + 1)^{\overline{\alpha - 1}}h(s) \\ & = -\frac{1}{\Gamma(\alpha)}\sum^{t}_{s = a + 2}(t - s + 1)^{\overline{\alpha - 1}}h(s) + \frac{(t - a)^{\overline{\alpha - 2}}}{(b - a)^{\overline{\alpha - 2}}\Gamma(\alpha)}\sum^{b}_{s = a + 2}(b - s + 1)^{\overline{\alpha - 1}}h(s) \\ & = \frac{1}{\Gamma(\alpha)}\sum^{t}_{s = a + 2}\Big{[}\frac{(b - s + 1)^{\overline{\alpha - 1}}(t - a)^{\overline{\alpha - 2}}}{(b - a)^{\overline{\alpha - 2}}} - (t - s + 1)^{\overline{\alpha - 1}}\Big{]}h(s) \\ & + \frac{1}{\Gamma(\alpha)}\sum^{b}_{s = t + 1}\frac{(b - s + 1)^{\overline{\alpha - 1}}(t - a)^{\overline{\alpha - 2}}}{(b - a)^{\overline{\alpha - 2}}}h(s) \\ & = \sum^{b}_{s = a + 2}G_{l}(t, s)h(s).
\end{align*}
\end{proof}

First, we show that this Green's function is nonnegative and obtain an upper bound for the Green's function and its integral.

\begin{thm} \label{LF Green Positive}
The Green's function $G_{l}(t, s)$ satisfies $G_{l}(t, s) \geq 0$ for $(t, s) \in \mathbb{N}^{b}_{a + 1} \times \mathbb{N}^{b}_{a + 2}$.
\end{thm}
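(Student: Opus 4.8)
The plan is to check the sign of $G_l(t,s)$ on each of the two pieces of \eqref{LF Green} separately. On the first piece ($t\in\mathbb{N}^{\rho(s)}_{a+1}$),
$$G_l(t,s)=\frac{1}{\Gamma(\alpha)}\,\frac{(b-s+1)^{\overline{\alpha-1}}\,(t-a)^{\overline{\alpha-2}}}{(b-a)^{\overline{\alpha-2}}},$$
and I would simply observe that each of the four factors is positive. Indeed, since $1<\alpha<2$, $a+1\le t$, $a+2\le s\le b$ and $b\ge a+2$, writing $x^{\overline r}=\Gamma(x+r)/\Gamma(x)$ one has $b-s+\alpha\ge\alpha>0$ with $b-s+1\ge1$; $t-a+\alpha-2\ge\alpha-1>0$ with $t-a\ge1$; $b-a+\alpha-2\ge\alpha>0$ with $b-a\ge2$; and $\Gamma(\alpha)>0$ by Theorem~\ref{Gamma}(1). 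Hence $G_l(t,s)>0$ here. (Alternatively, Theorem~\ref{Gamma}(3) gives $(b-s+1)^{\overline{\alpha-1}}\ge1^{\overline{\alpha-1}}=\Gamma(\alpha)>0$, and similarly for the other factors.)

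The real work is the second piece, $t\in\mathbb{N}^b_s$. First I would note that $G_l(b,s)=0$, which is immediate from \eqref{LF Green} (and is dictated by the boundary condition $u(b)=0$). It then suffices to show that, for fixed $s$, the function $t\mapsto G_l(t,s)$ is nonincreasing on $\mathbb{N}^b_s$, since that yields $G_l(t,s)\ge G_l(b,s)=0$. Differencing the second branch of \eqref{LF Green} by means of Theorem~\ref{Gamma}(5), a direct computation gives, for $t\in\mathbb{N}^b_{s+1}$,
$$\big(\nabla_t G_l\big)(t,s)=\frac{1}{\Gamma(\alpha)}\left[\frac{(b-s+1)^{\overline{\alpha-1}}}{(b-a)^{\overline{\alpha-2}}}\,(\alpha-2)\,(t-a)^{\overline{\alpha-3}}-(\alpha-1)\,(t-s+1)^{\overline{\alpha-2}}\right].$$
For $t\ge s+1\ge a+3$ both rising factorials $(t-a)^{\overline{\alpha-3}}$ and $(t-s+1)^{\overline{\alpha-2}}$ are positive (all of $t-a+\alpha-3$, $t-a$, $t-s+\alpha-1$, $t-s+1$ exceed $0$), the prefactor is positive as in the first piece, and $\alpha-2<0<\alpha-1$; hence both bracketed terms are negative and $\big(\nabla_t G_l\big)(t,s)<0$. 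Telescoping then gives $G_l(t,s)=G_l(b,s)-\sum_{\tau=t+1}^{b}\big(\nabla_\tau G_l\big)(\tau,s)\ge0$ for every $t\in\mathbb{N}^b_s$, finishing the proof.

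The one point needing care is the sign bookkeeping in the second piece: because $1<\alpha<2$ the exponents $\alpha-2,\alpha-3$ are negative, so the positivity of a rising factorial like $(t-a)^{\overline{\alpha-3}}$ is not automatic and must be deduced from the positivity of the gammas in $\Gamma(t-a+\alpha-3)/\Gamma(t-a)$ — this is exactly where the standing hypotheses $a+2\le s\le t$ and $b\ge a+2$ enter. Note also that it is enough to compute $\nabla_t G_l$ only for $t\ge s+1$, where $t$ and $t-1$ lie in the same branch, so no separate matching computation at the junction $t=s$ is required; the telescoping sum absorbs it. A more computational alternative to the monotonicity step is to show directly that the bracket in the second branch is nonnegative by rewriting the inequality as a comparison of two products of $b-t$ positive linear factors and checking it factorwise, but the monotonicity argument is cleaner.
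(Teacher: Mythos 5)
Your proof is correct, but for the substantive case $t\in\mathbb{N}^{b}_{s}$ you take a genuinely different route from the paper. The paper argues pointwise: using Theorem~\ref{Gamma}(3)--(4) it bounds $(t-s+1)^{\overline{\alpha-1}}\leq(b-s+1)^{\overline{\alpha-1}}$ and $(b-a)^{\overline{\alpha-2}}\leq(t-a)^{\overline{\alpha-2}}$, factors out $(b-s+1)^{\overline{\alpha-1}}$, and concludes
\begin{equation*}
G_{l}(t,s)\geq\frac{(b-s+1)^{\overline{\alpha-1}}}{\Gamma(\alpha)}\Big[\frac{(t-a)^{\overline{\alpha-2}}}{(b-a)^{\overline{\alpha-2}}}-1\Big]\geq 0,
\end{equation*}
a two-line estimate with no differencing. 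You instead observe $G_{l}(b,s)=0$ and prove $\nabla_{t}G_{l}(t,s)<0$ on $\mathbb{N}^{b}_{s+1}$, then telescope; your sign analysis of the nabla difference, the restriction to $t\geq s+1$ so that both $t$ and $t-1$ lie in the same branch, and the verification that $(t-a)^{\overline{\alpha-3}}$ and $(t-s+1)^{\overline{\alpha-2}}$ are positive are all sound. The paper's argument is shorter and needs only the monotonicity properties of the rising function; yours costs a derivative computation but buys more --- the monotonic decrease in $t$ is exactly what the paper re-derives separately in Theorem~\ref{LF Green Max} to locate the maximum of $G_{l}$, so your proof effectively does double duty. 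Both are complete; there is no gap.
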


\begin{proof}
For $t \in \mathbb{N}^{\rho(s)}_{a + 1}$, using Theorem \ref{Gamma}, 
\begin{align*}
G_{l}(t, s) & = \frac{(b - s + 1)^{\overline{\alpha - 1}}(t - a)^{\overline{\alpha - 2}}}{\Gamma(\alpha)(b - a)^{\overline{\alpha - 2}}} \\ & = \frac{\Gamma(b - s + \alpha)\Gamma(t - a + \alpha - 2)\Gamma(b - a)}{\Gamma(\alpha)\Gamma(b - s + 1)\Gamma(b - a + \alpha - 2)\Gamma(t - a)} > 0. 
\end{align*}
Now, suppose $t \in \mathbb{N}^{b}_{s}$. Since $t \leq b$ and $(\alpha - 2) < (t - a) \leq (b - a)$, by Theorem \ref{Gamma}, we have $$(t - s + 1)^{\overline{\alpha - 1}} \leq (b - s + 1)^{\overline{\alpha - 1}} ~ \text{and} ~ (b - a)^{\overline{\alpha - 2}} \leq (t - a)^{\overline{\alpha - 2}},$$ implies 
\begin{align*}
G_{l}(t, s) & = \frac{1}{\Gamma(\alpha)}\Big{[}\frac{(b - s + 1)^{\overline{\alpha - 1}}(t - a)^{\overline{\alpha - 2}}}{(b - a)^{\overline{\alpha - 2}}} - (t - s + 1)^{\overline{\alpha - 1}}\Big{]} \\ & \geq \frac{(b - s + 1)^{\overline{\alpha - 1}}}{\Gamma(\alpha)}\Big{[}\frac{(t - a)^{\overline{\alpha - 2}}}{(b - a)^{\overline{\alpha - 2}}} - 1\Big{]} \geq 0.
\end{align*}
Hence the proof.
\end{proof}

\begin{thm} \label{LF Green Max}
The maximum of the Green's function $G_{l}(t, s)$ defined in \eqref{LF Green} is given by $$\max_{(t, s) \in \mathbb{N}^{b}_{a + 1} \times \mathbb{N}^{b}_{a + 2}}G_{l}(t, s) = \frac{(b - a - 1)}{(\alpha - 1)}.$$
\end{thm}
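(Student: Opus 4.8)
The plan is to collapse the two-variable maximization into a one-variable one. Throughout, recall that $1 < \alpha < 2$, so $\alpha - 2 < 0$: by Theorem~\ref{Gamma}(4) (with $2 - \alpha < 1 \le t - a$) the factor $(t-a)^{\overline{\alpha-2}}$ is \emph{decreasing} in $t$ on $\mathbb{N}_{a+1}$, while by Theorem~\ref{Gamma}(3) the factor $(b-s+1)^{\overline{\alpha-1}}$ is increasing in $b-s+1$, hence decreasing in $s$; moreover all the rising functions appearing in \eqref{LF Green} are strictly positive on the relevant ranges.

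Step 1: fix $s \in \mathbb{N}^b_{a+2}$ and bound $G_l(\cdot,s)$ from above by its value at $t = a+1$. Since $s \ge a+2$ we have $\rho(s) = s-1 \ge a+1$, so the point $t = a+1$ always lies in the first branch and $G_l(a+1,s) = \frac{\Gamma(\alpha-1)}{\Gamma(\alpha)}\frac{(b-s+1)^{\overline{\alpha-1}}}{(b-a)^{\overline{\alpha-2}}}$, using $(a+1-a)^{\overline{\alpha-2}} = 1^{\overline{\alpha-2}} = \Gamma(\alpha-1)$. For $t$ in the first branch, $a+1 \le t \le \rho(s)$, the inequality $G_l(t,s) \le G_l(a+1,s)$ is immediate from \eqref{LF Green} and the monotonicity $(t-a)^{\overline{\alpha-2}} \le 1^{\overline{\alpha-2}}$. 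For $t$ in the second branch, $s \le t \le b$, the key point is that the subtracted term $(t-s+1)^{\overline{\alpha-1}}$ is nonnegative (indeed $t-s+1 \ge 1$), so it may simply be discarded, after which $(t-a)^{\overline{\alpha-2}} \le 1^{\overline{\alpha-2}}$ again (here $t \ge s \ge a+2 > a+1$) gives $G_l(t,s) \le G_l(a+1,s)$. Hence $\max_{t} G_l(t,s) = G_l(a+1,s)$.

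Step 2: maximize $G_l(a+1,s)$ over $s \in \mathbb{N}^b_{a+2}$. Since $(b-a)^{\overline{\alpha-2}}$ does not depend on $s$ and $(b-s+1)^{\overline{\alpha-1}}$ is decreasing in $s$, the maximum occurs at $s = a+2$, so that
\[
\max_{(t,s) \in \mathbb{N}^{b}_{a+1}\times\mathbb{N}^{b}_{a+2}} G_l(t,s) = G_l(a+1,a+2) = \frac{\Gamma(\alpha-1)}{\Gamma(\alpha)}\cdot\frac{(b-a-1)^{\overline{\alpha-1}}}{(b-a)^{\overline{\alpha-2}}}.
\]
Step 3: simplify. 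By the definition of the rising function, $(b-a-1)^{\overline{\alpha-1}} = \Gamma(b-a+\alpha-2)/\Gamma(b-a-1)$ and $(b-a)^{\overline{\alpha-2}} = \Gamma(b-a+\alpha-2)/\Gamma(b-a)$, so their ratio is $\Gamma(b-a)/\Gamma(b-a-1) = b-a-1$; and $\Gamma(\alpha-1)/\Gamma(\alpha) = 1/(\alpha-1)$ by the reduction formula. This yields the stated value $(b-a-1)/(\alpha-1)$.

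I do not anticipate a genuine obstacle. The only things requiring care are the reversed monotonicity of $(t-a)^{\overline{\alpha-2}}$ caused by the negative exponent $\alpha-2$, and the observation in Step 1 that on the lower branch one may discard the nonnegative term $(t-s+1)^{\overline{\alpha-1}}$ rather than carrying out a separate interior (discrete critical point) analysis there — this is what makes both branches reduce to the single clean bound $G_l(a+1,s)$, which is then trivially optimized in $s$.
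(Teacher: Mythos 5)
Your proof is correct and follows essentially the same route as the paper: for each fixed $s$ the maximum over $t$ is attained at $t = a+1$, and then the maximum over $s$ at $s = a+2$, giving $G_{l}(a+1, a+2) = (b-a-1)/(\alpha-1)$ after the same gamma-function cancellations. The only difference is in how the reduction to $t = a+1$ is justified: you bound each branch directly (using the monotonicity of $(t-a)^{\overline{\alpha-2}}$ from Theorem \ref{Gamma} and discarding the nonnegative term $(t-s+1)^{\overline{\alpha-1}}$ on the lower branch), whereas the paper computes $\nabla_{t}\big[G_{l}(t,s)\big]$ on each branch to show $G_{l}$ is decreasing in $t$ and then compares the candidate values $G_{l}(a+1,s)$, $G_{l}(a+2,s)$ and $G_{l}(s,s)$ --- your version is a slightly more economical packaging of the same idea.
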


\begin{proof}
Fix $s \in \mathbb{N}^{b}_{a + 2}$. Let $t \in \mathbb{N}^{\rho(s)}_{a + 2}$. Consider 
\begin{align*}
\nabla_{t}\big{[}G_{l}(t, s)\big{]} & = \frac{1}{\Gamma(\alpha)}\frac{(b - s + 1)^{\overline{\alpha - 1}}}{(b - a)^{\overline{\alpha - 2}}}\nabla_{t}\big{[}(t - a)^{\overline{\alpha - 2}}\big{]} \\ & = -\frac{(2 - \alpha)}{\Gamma(\alpha)}\frac{(b - s + 1)^{\overline{\alpha - 1}}(t - a)^{\overline{\alpha - 3}}}{(b - a)^{\overline{\alpha - 2}}} \\ & = -\frac{(2 - \alpha)}{\Gamma(\alpha)}\frac{\Gamma(b - s + \alpha)\Gamma(t - a + \alpha - 3)\Gamma(b - a)}{\Gamma(b - s + 1)\Gamma(b - a + \alpha - 2)\Gamma(t - a)} < 0,
\end{align*}
implies $G_{l}(t, s)$ is a decreasing function of $t$. Now, suppose $t \in \mathbb{N}^{b}_{s}$. Consider 
\begin{align}
\nonumber \nabla_{t}\big{[}G_{l}(t, s)\big{]} & = \frac{1}{\Gamma(\alpha)}\Big{[}\frac{(b - s + 1)^{\overline{\alpha - 1}}}{(b - a)^{\overline{\alpha - 2}}}\nabla_{t}\big{[}(t - a)^{\overline{\alpha - 2}}\big{]} - \nabla_{t}\big{[}(t - s + 1)^{\overline{\alpha - 1}}\big{]}\Big{]} \\ \nonumber & = -\frac{(2 - \alpha)}{\Gamma(\alpha)}\frac{(b - s + 1)^{\overline{\alpha - 1}}(t - a)^{\overline{\alpha - 3}}}{(b - a)^{\overline{\alpha - 2}}} - \frac{(\alpha - 1)}{\Gamma(\alpha)}(t - s + 1)^{\overline{\alpha - 2}} \\ \nonumber & = -\frac{(2 - \alpha)}{\Gamma(\alpha)}\frac{\Gamma(b - s + \alpha)\Gamma(t - a + \alpha - 3)\Gamma(b - a)}{\Gamma(b - s + 1)\Gamma(b - a + \alpha - 2)\Gamma(t - a)} \\ \nonumber & - \frac{(\alpha - 1)}{\Gamma(\alpha)}\frac{\Gamma(t - s + \alpha - 1)}{\Gamma(t - s + 1)} < 0,
\end{align}
implies $G_{l}(t, s)$ is a decreasing function of $t$. Now, we examine the Green's function to determine whether the maximum for a fixed $s$ will occur at $(a + 1, s)$, $(a + 2, s)$ or $(s, s)$. We have
\begin{equation*}
G_{l}(a + 1, s) = \frac{(b - s + 1)^{\overline{\alpha - 1}}}{(\alpha - 1)(b - a)^{\overline{\alpha - 2}}},
\end{equation*}
\begin{equation*}
G_{l}(a + 2, s) = \frac{(b - s + 1)^{\overline{\alpha - 1}}}{(b - a)^{\overline{\alpha - 2}}},
\end{equation*}
and
\begin{equation*}
G_{l}(s, s) = \frac{(b - s + 1)^{\overline{\alpha - 1}}(s - a)^{\overline{\alpha - 2}}}{\Gamma(\alpha)(b - a)^{\overline{\alpha - 2}}} - 1.
\end{equation*}
Clearly, $G_{l}(a + 2, s) < G_{l}(a + 1, s)$ and $$\max_{s \in \mathbb{N}^{b}_{a + 2}}G_{l}(a + 1, s) =  \frac{(b - a - 1)^{\overline{\alpha - 1}}}{(\alpha - 1)(b - a)^{\overline{\alpha - 2}}} = \frac{(b - a - 1)}{(\alpha - 1)}.$$ Also, $$\max_{s \in \mathbb{N}^{b}_{a + 2}}G_{l}(s, s) =  \frac{(b - a - 1)^{\overline{\alpha - 1}}}{(b - a)^{\overline{\alpha - 2}}} - 1 = (b - a - 2).$$ Thus, $$\max_{(t, s) \in \mathbb{N}^{b}_{a + 1} \times \mathbb{N}^{b}_{a + 2}}G_{l}(t, s) = \frac{(b - a - 1)}{(\alpha - 1)}.$$
\end{proof}

\begin{thm} \label{LF Green Int Max}
The following inequality holds for the Green's function $G_{l}(t, s)$ from \eqref{LF Green}. $$\sum^{b}_{s = a + 2}G_{l}(t, s) \leq \frac{(b - a - 1)(b - a + \alpha - 2)}{\alpha(\alpha - 1)}, \quad (t, s) \in \mathbb{N}^{b}_{a + 1} \times \mathbb{N}^{b}_{a + 2}.$$
\end{thm}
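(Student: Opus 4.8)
The plan is to reduce the estimate to evaluating $\sum_{s=a+2}^{b}G_l(t,s)$ at the single value $t=a+1$, and then to compute that sum in closed form. In the proof of Theorem~\ref{LF Green Max} it was shown that for each fixed $s\in\mathbb{N}^b_{a+2}$ the function $t\mapsto G_l(t,s)$ is decreasing on $\mathbb{N}^b_{a+2}$ (the computation of $\nabla_t[G_l(t,s)]<0$ covers both branches and the junction $t=s$), and moreover that $G_l(a+2,s)<G_l(a+1,s)$; hence $t\mapsto G_l(t,s)$ is decreasing on all of $\mathbb{N}^b_{a+1}$. Summing over $s$, the function $t\mapsto\sum_{s=a+2}^{b}G_l(t,s)$ is decreasing on $\mathbb{N}^b_{a+1}$, so it suffices to show $\sum_{s=a+2}^{b}G_l(a+1,s)\le\frac{(b-a-1)(b-a+\alpha-2)}{\alpha(\alpha-1)}$; in fact I expect equality here.

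Next I would use the explicit value $G_l(a+1,s)=\dfrac{(b-s+1)^{\overline{\alpha-1}}}{(\alpha-1)(b-a)^{\overline{\alpha-2}}}$ already recorded in the proof of Theorem~\ref{LF Green Max}, valid because $a+1\le\rho(s)$ for every $s\ge a+2$, so only the first branch of \eqref{LF Green} occurs, and because $1^{\overline{\alpha-2}}/\Gamma(\alpha)=\Gamma(\alpha-1)/\Gamma(\alpha)=1/(\alpha-1)$. Thus $\sum_{s=a+2}^{b}G_l(a+1,s)=\dfrac{1}{(\alpha-1)(b-a)^{\overline{\alpha-2}}}\sum_{s=a+2}^{b}(b-s+1)^{\overline{\alpha-1}}$, and the whole problem comes down to summing the rising factorials $(b-s+1)^{\overline{\alpha-1}}$.

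For that sum I would find an antidifference: by Theorem~\ref{Gamma}(6), $\nabla_s(b-s)^{\overline{\alpha}}=-\alpha(b-\rho(s))^{\overline{\alpha-1}}=-\alpha(b-s+1)^{\overline{\alpha-1}}$ for $s\in\mathbb{N}^b_{a+2}$, so $(b-s+1)^{\overline{\alpha-1}}=\nabla_s\!\left(-\tfrac{1}{\alpha}(b-s)^{\overline{\alpha}}\right)$. Telescoping then gives $\sum_{s=a+2}^{b}(b-s+1)^{\overline{\alpha-1}}=\tfrac{1}{\alpha}\big((b-a-1)^{\overline{\alpha}}-0^{\overline{\alpha}}\big)=\tfrac{1}{\alpha}(b-a-1)^{\overline{\alpha}}$, using the convention $0^{\overline{\alpha}}=0$. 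Substituting back and simplifying the quotient $(b-a-1)^{\overline{\alpha}}/(b-a)^{\overline{\alpha-2}}$ with Theorem~\ref{Gamma}(2) and the reduction formula $\Gamma(z+1)=z\Gamma(z)$ to $(b-a-1)(b-a+\alpha-2)$, I obtain exactly $\frac{(b-a-1)(b-a+\alpha-2)}{\alpha(\alpha-1)}$, which finishes the argument. The only step requiring a little care is the first one — verifying that the decreasing behaviour of $G_l(\cdot,s)$ from Theorem~\ref{LF Green Max} genuinely propagates over the whole index set $\mathbb{N}^b_{a+1}$, including the boundary point $t=a+1$ and the branch junction $t=s$; everything after that is a routine manipulation of rising factorials.
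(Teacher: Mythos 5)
Your proposal is correct and reaches the sharp constant, but it organizes the argument differently from the paper. The paper first computes the closed form
\[
\sum^{b}_{s = a + 2}G_{l}(t, s) = \frac{(t - a)^{\overline{\alpha - 2}}}{(b - a)^{\overline{\alpha - 2}}}\frac{(b - a - 1)^{\overline{\alpha}}}{\Gamma(\alpha + 1)} - \frac{(t - a - 1)^{\overline{\alpha}}}{\Gamma(\alpha + 1)}
\]
for every $t \in \mathbb{N}^{b}_{a+1}$, then discards the nonnegative subtracted term and maximizes the remaining coefficient $(t-a)^{\overline{\alpha-2}}$ (which, by Theorem \ref{Gamma}(4), occurs at $t = a+1$, where the discarded term also vanishes). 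You instead invoke the termwise monotonicity of $G_{l}(\cdot, s)$ already established in Theorem \ref{LF Green Max} to reduce at the outset to $t = a+1$, and then only need the single sum $\sum_{s=a+2}^{b}(b-s+1)^{\overline{\alpha-1}} = \tfrac{1}{\alpha}(b-a-1)^{\overline{\alpha}}$, which you evaluate by the telescoping antidifference from Theorem \ref{Gamma}(6) --- the same identity the paper uses implicitly via the power rule. Your route buys a shorter computation (no need for the closed form at general $t$) at the price of having to certify that the decrease of $G_{l}(\cdot,s)$ really holds across the branch junction $t = s$ and at the boundary step from $a+1$ to $a+2$; you correctly flag this, and it does hold, since the second-branch expression evaluated at $t = s - 1$ agrees with the first-branch value (because $0^{\overline{\alpha-1}} = 0$), so the nabla computation in Theorem \ref{LF Green Max} is valid at $t = s$ as well. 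The paper's route avoids that delicacy entirely because it never needs monotonicity of $G_{l}$ itself, only positivity of $(t-a-1)^{\overline{\alpha}}$ and monotonicity of $(t-a)^{\overline{\alpha-2}}$. Both arguments give equality at $t = a+1$, confirming the bound is attained.
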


\begin{proof}
Consider 
\begin{align*}
\sum^{b}_{s = a + 2}G_{l}(t, s) & = \frac{1}{\Gamma(\alpha)}\sum^{t}_{s = a + 2}\Big{[}\frac{(b - s + 1)^{\overline{\alpha - 1}}(t - a)^{\overline{\alpha - 2}}}{(b - a)^{\overline{\alpha - 2}}} - (t - s + 1)^{\overline{\alpha - 1}}\Big{]} \\ & + \frac{1}{\Gamma(\alpha)}\sum^{b}_{s = t + 1}\frac{(b - s + 1)^{\overline{\alpha - 1}}(t - a)^{\overline{\alpha - 2}}}{(b - a)^{\overline{\alpha - 2}}}\\ & = \frac{(t - a)^{\overline{\alpha - 2}}}{(b - a)^{\overline{\alpha - 2}}}\sum^{b}_{s = a + 2}\frac{(b - s + 1)^{\overline{\alpha - 1}}}{\Gamma(\alpha)} - \sum^{t}_{s = a + 2}\frac{(t - s + 1)^{\overline{\alpha - 1}}}{\Gamma(\alpha)} \\ & = \frac{(t - a)^{\overline{\alpha - 2}}}{(b - a)^{\overline{\alpha - 2}}}\frac{(b - a - 1)^{\overline{\alpha}}}{\Gamma(\alpha + 1)} - \frac{(t - a - 1)^{\overline{\alpha}}}{\Gamma(\alpha + 1)}.
\end{align*}
We now find the maximum of this expression with respect to $t \in \mathbb{N}^{b}_{a + 1}$. Using Theorem \ref{Gamma}, for $t \in \mathbb{N}^{b}_{a + 1}$, $$\frac{(t - a - 1)^{\overline{\alpha}}}{\Gamma(\alpha + 1)} = \frac{\Gamma(t - a + \alpha - 1)}{\Gamma(\alpha + 1)\Gamma(t - a - 1)} \geq 0.$$ Thus, $$\max_{t \in \mathbb{N}^{b}_{a + 1}}\Big{[}\sum^{b}_{s = a + 2}G_{l}(t, s)\Big{]} = \frac{(b - a - 1)^{\overline{\alpha}}}{\alpha(\alpha - 1)(b - a)^{\overline{\alpha - 2}}} = \frac{(b - a - 1)(b - a + \alpha - 2)}{\alpha(\alpha - 1)}.$$ Hence the proof.
\end{proof}

We are now able to formulate a Lyapunov-type inequality for the left-focal type discrete boundary value problem.

\begin{thm} \label{LF Theorem 2}
If the following discrete fractional boundary value problem
\begin{equation} \label{LF FDE 2}
\begin{cases}
\big{(}\nabla^{\alpha}_{a}u\big{)}(t) + q(t)y(t) = 0, \quad t \in \mathbb{N}^{b}_{a + 2}, \\ \big{(}\nabla^{\alpha - 1}_{a} u\big{)}(a + 1) = 0, ~ u(b) = 0,
\end{cases}
\end{equation}
has a nontrivial solution, then
\begin{equation} \label{LF Lyp}
\sum^{b}_{s = a + 2}|q(s)| \geq \frac{(\alpha - 1)}{(b - a - 1)}.
\end{equation}
\end{thm}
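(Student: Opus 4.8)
The plan is to turn the boundary value problem \eqref{LF FDE 2} into a summation equation driven by the Green's function $G_l$ and then run the standard supremum-norm estimate. Reading the forcing term as $h(t) = q(t)u(t)$ (so that the equation in \eqref{LF FDE 2} is $\big(\nabla^{\alpha}_{a}u\big)(t) + q(t)u(t) = 0$), the solution-representation result behind \eqref{LF Solution} shows that any nontrivial solution $u$ of \eqref{LF FDE 2} satisfies
\begin{equation*}
u(t) = \sum_{s = a + 2}^{b} G_{l}(t, s)\, q(s)\, u(s), \qquad t \in \mathbb{N}^{b}_{a + 1}.
\end{equation*}
Since $\mathbb{N}^{b}_{a+1}$ is a finite set and $u \not\equiv 0$, the number $\|u\| := \max_{t \in \mathbb{N}^{b}_{a+1}} |u(t)|$ is attained and is strictly positive.

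Next I would take absolute values and invoke the two properties of $G_l$ proved above. Nonnegativity (Theorem \ref{LF Green Positive}) gives $|G_l(t,s)| = G_l(t,s)$, and the sharp pointwise bound (Theorem \ref{LF Green Max}) gives $G_l(t,s) \le \frac{b-a-1}{\alpha-1}$ for all $(t,s) \in \mathbb{N}^{b}_{a+1}\times\mathbb{N}^{b}_{a+2}$. Using also $|u(s)| \le \|u\|$ for $s \in \mathbb{N}^{b}_{a+2} \subseteq \mathbb{N}^{b}_{a+1}$, we obtain for every $t \in \mathbb{N}^{b}_{a+1}$
\begin{equation*}
|u(t)| \le \sum_{s=a+2}^{b} G_l(t,s)\,|q(s)|\,|u(s)| \le \|u\|\,\Big(\max_{(t,s)} G_l(t,s)\Big)\sum_{s=a+2}^{b} |q(s)| = \|u\|\,\frac{b-a-1}{\alpha-1}\sum_{s=a+2}^{b}|q(s)|.
\end{equation*}
Taking the maximum over $t$ on the left and cancelling the positive factor $\|u\|$ yields $1 \le \frac{b-a-1}{\alpha-1}\sum_{s=a+2}^{b}|q(s)|$, which is exactly \eqref{LF Lyp} after rearranging.

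The only step that genuinely needs care is the first reduction: one must confirm that a nontrivial solution of \eqref{LF FDE 2} meets the hypotheses of the representation theorem for \eqref{LF Theorem 1} with $h(t) = q(t)u(t)$, so that \eqref{LF Solution} is legitimately applicable; everything after that is a mechanical application of Theorems \ref{LF Green Positive} and \ref{LF Green Max}. I would remark in passing that the alternative route — bounding $\sum_{s} G_l(t,s)$ by Theorem \ref{LF Green Int Max} and pulling out $\max_s |q(s)|$ — also works but produces an inequality of a different shape (a lower bound on $\max_s |q(s)|$ rather than on the sum), whereas the pointwise bound on $G_l$ itself is what gives the clean constant $(\alpha-1)/(b-a-1)$ in \eqref{LF Lyp}.
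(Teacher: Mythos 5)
Your argument is correct and is essentially identical to the paper's own proof: both reduce \eqref{LF FDE 2} to the summation equation $u(t) = \sum_{s=a+2}^{b} G_{l}(t,s)q(s)u(s)$ via Theorem \ref{LF Theorem 1} (reading the misprinted $y$ as $u$), then apply the supremum-norm estimate with the nonnegativity and pointwise maximum of $G_{l}$ from Theorems \ref{LF Green Positive} and \ref{LF Green Max} and cancel $\|u\| > 0$. Your closing remark about the alternative route through Theorem \ref{LF Green Int Max} is a correct observation but is not part of the paper's argument.
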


\begin{proof}
Let $\mathfrak{B}$ be the Banach space of functions endowed with norm $$\|u\| = \max_{t \in \mathbb{N}^{b}_{a + 1}}|u(t)|.$$ It follows from Theorem \ref{LF Theorem 1} that a solution to \eqref{LF FDE 2} satisfies the equation
\begin{equation*}
u(t) = \sum^{b}_{s = a + 2}G(t, s)q(s)u(s).
\end{equation*}
Hence, 
\begin{align*}
\|u\| & = \max_{t \in \mathbb{N}^{b}_{a + 1}}\Big{|}\sum^{b}_{s = a + 2}G(t, s)q(s)u(s)\Big{|} \\ & \leq \max_{t \in \mathbb{N}^{b}_{a + 1}}\Big{[}\sum^{b}_{s = a + 2}G(t, s)|q(s)||u(s)|\Big{]} \\ & \leq \|u\| \Big{[}\max_{t \in \mathbb{N}^{b}_{a + 1}}\sum^{b}_{s = a + 2}G(t, s)|q(s)|\Big{]} \\ & \leq \|u\| \Big{[}\max_{t \in \mathbb{N}^{b}_{a + 1}, ~ s \in \mathbb{N}^{b}_{a + 2}}G(t, s)\Big{]}\sum^{b}_{s = a + 2}|q(s)|,
\end{align*}
or, equivalently, $$1 \leq \Big{[}\max_{t \in \mathbb{N}^{b}_{a + 1}, ~ s \in \mathbb{N}^{b}_{a + 2}}G(t, s)\Big{]}\sum^{b}_{s = a + 2}|q(s)|.$$ An application of Theorem \ref{LF Green Max} yields the result. 
\end{proof}

Now, we discuss three applications of Theorem \ref{LF Theorem 2}. First, we obtain a criterion for the nonexistence of nontrivial solutions of \eqref{LF FDE 2}.

\begin{thm} \label{LF Theorem 3}
Assume that $1 < \alpha < 2$ and 
\begin{equation}
\sum^{b}_{s = a + 2}|q(s)| < \frac{(\alpha - 1)}{(b - a - 1)}.
\end{equation}
Then, the discrete fractional boundary value problem \eqref{LF FDE 2} has no nontrivial solution on $\mathbb{N}^{b}_{a + 1}$. 
\end{thm}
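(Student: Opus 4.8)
The plan is to argue by contraposition, treating this statement as an immediate corollary of the Lyapunov-type inequality just established. Concretely, suppose for contradiction that the discrete fractional boundary value problem \eqref{LF FDE 2} does admit a nontrivial solution $u$ on $\mathbb{N}^{b}_{a+1}$ under the stated standing assumption $1 < \alpha < 2$.

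Then I would invoke Theorem \ref{LF Theorem 2} verbatim: since the boundary conditions and the operator $\nabla^{\alpha}_{a}$ appearing in \eqref{LF FDE 2} are precisely those covered by that theorem, the existence of a nontrivial solution forces
\begin{equation*}
\sum^{b}_{s = a + 2}|q(s)| \geq \frac{(\alpha - 1)}{(b - a - 1)}.
\end{equation*}
This directly contradicts the hypothesis $\sum^{b}_{s = a + 2}|q(s)| < \frac{(\alpha - 1)}{(b - a - 1)}$ of the present theorem. Hence no nontrivial solution can exist on $\mathbb{N}^{b}_{a+1}$, which is the desired conclusion.

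There is essentially no technical obstacle here; the only point requiring a moment's care is bookkeeping—checking that the denominator $(b-a-1)$ is positive (guaranteed since $b - a$ is assumed to be a positive integer with $b \geq a+2$ for the problem to be nonvacuous) so that the strict inequality in the hypothesis and the weak inequality from Theorem \ref{LF Theorem 2} are genuinely incompatible, and noting that the strictness of the hypothesized bound is exactly what produces the contradiction against the non-strict Lyapunov inequality. I would therefore present the argument as a short two-line contrapositive, with a brief remark that it reformulates Theorem \ref{LF Theorem 2} as a nonexistence criterion.
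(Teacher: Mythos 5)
Your argument is correct and is exactly how the paper treats this result: Theorem \ref{LF Theorem 3} is the contrapositive of the Lyapunov-type inequality in Theorem \ref{LF Theorem 2}, and the paper gives no separate proof for this reason. Your added remark about the strict versus non-strict inequality and the positivity of $(b-a-1)$ is sound bookkeeping but does not change the substance.
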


Next, we estimate a lower bound for eigenvalues of the eigenvalue problem corresponding to \eqref{LF FDE 2}.

\begin{thm} \label{LF Theorem 4}
Assume that $1 < \alpha < 2$ and $u$ is a nontrivial solution of the eigenvalue problem
\begin{equation} \label{LF FDE 3}
\begin{cases}
\big{(}\nabla^{\alpha}_{a}u\big{)}(t) + \lambda u(t) = 0, \quad t \in \mathbb{N}^{b}_{a + 2},\\
\big{(}\nabla^{\alpha - 1}_{a}u\big{)}(a + 1) = 0, ~ u(b) = 0,
\end{cases}
\end{equation}
where $u(t) \neq 0$ for each $t \in \mathbb{N}^{b - 1}_{a + 2}$. Then,
\begin{equation} \label{LF Bound}
|\lambda| \geq \frac{(\alpha - 1)}{(b - a - 1)^{2}}.
\end{equation}
\end{thm}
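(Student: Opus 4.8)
The plan is to derive \eqref{LF Bound} as a direct corollary of Theorem~\ref{LF Theorem 2} by taking $q(t) \equiv \lambda$ in the boundary value problem \eqref{LF FDE 2}. Observe that the eigenvalue problem \eqref{LF FDE 3} is precisely \eqref{LF FDE 2} with the choice $q(s) = \lambda$ for every $s \in \mathbb{N}^{b}_{a+2}$, so a nontrivial solution $u$ of \eqref{LF FDE 3} is in particular a nontrivial solution of \eqref{LF FDE 2} with this constant coefficient. Hence Theorem~\ref{LF Theorem 2} applies and yields
\begin{equation*}
\sum^{b}_{s = a + 2}|q(s)| = \sum^{b}_{s = a + 2}|\lambda| = (b - a - 1)|\lambda| \geq \frac{(\alpha - 1)}{(b - a - 1)},
\end{equation*}
since the index set $\mathbb{N}^{b}_{a+2}$ contains exactly $b - (a+2) + 1 = b - a - 1$ points. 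Dividing both sides by $(b-a-1)$ (which is a positive integer, hence nonzero) gives $|\lambda| \geq (\alpha-1)/(b-a-1)^2$, which is exactly \eqref{LF Bound}.

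The key steps, in order, are: first, identify \eqref{LF FDE 3} as the special case $q \equiv \lambda$ of \eqref{LF FDE 2}; second, invoke Theorem~\ref{LF Theorem 2} to get the Lyapunov-type inequality $\sum_{s=a+2}^{b}|q(s)| \geq (\alpha-1)/(b-a-1)$; third, evaluate the constant sum as $(b-a-1)|\lambda|$ by counting the cardinality of the index set; and fourth, rearrange. I should also note the mild technical point that the hypothesis $u(t)\neq 0$ for $t\in\mathbb{N}^{b-1}_{a+2}$ is only needed to ensure that $u$ is genuinely nontrivial in the sense required by Theorem~\ref{LF Theorem 2} (and that the Green's function representation $u(t)=\sum_{s}G_l(t,s)\lambda u(s)$ is being applied to a function that does not vanish identically on the summation range), so that the division step in the proof of Theorem~\ref{LF Theorem 2} — where one cancels $\|u\|$ from both sides — is legitimate.

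There is essentially no hard part here; the statement is a routine specialization. The only thing to be careful about is the bookkeeping of the index set cardinality: the sum runs over $s = a+2, a+3, \ldots, b$, which has $b-a-1$ terms, and it is this count that converts the single factor $(b-a-1)$ in the denominator of \eqref{LF Lyp} into the squared factor $(b-a-1)^2$ in \eqref{LF Bound}. If one were to write the proof out in full, it would be three or four lines. Accordingly, I would present it compactly, perhaps even noting that an entirely analogous argument (with the appropriate Green's function and Lyapunov constant) will produce lower bounds for eigenvalues in the other boundary value problems treated later in the paper.
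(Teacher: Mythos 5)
Your proposal is correct and matches the paper's intended argument: the paper presents Theorem \ref{LF Theorem 4} (without a written-out proof) precisely as the specialization $q \equiv \lambda$ of Theorem \ref{LF Theorem 2}, with the factor $(b-a-1)$ coming from the cardinality of $\mathbb{N}^{b}_{a+2}$ exactly as you computed. Your remark about the role of the hypothesis $u(t)\neq 0$ on $\mathbb{N}^{b-1}_{a+2}$ is also apt; nothing further is needed.
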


Finally, we deduce a criterion for the nonexistence of real zeros of certain nabla Mittag-Leffler functions.

\begin{thm} \label{LF Theorem 5}
Let $1 < \alpha < 2$. Then, the function $\lambda E_{-\lambda, \alpha, \alpha - 1}(t, 0) + E_{-\lambda, \alpha, \alpha - 2}(t, 0)$ has no real zeros for $$|\lambda| < \frac{(\alpha - 1)}{(n - 1)^{2}}.$$
\end{thm}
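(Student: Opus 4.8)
The plan is to derive the statement from the Lyapunov-type inequality of Theorem~\ref{LF Theorem 2} by contradiction, after recognizing the function $f(t):=\lambda E_{-\lambda,\alpha,\alpha-1}(t,0)+E_{-\lambda,\alpha,\alpha-2}(t,0)$ as (a scalar multiple of) the solution of a left-focal eigenvalue problem of the form \eqref{LF FDE 3} with $a=0$ and $b=n$. So fix an integer $n\ge 2$ and suppose, to the contrary, that there is a real $\lambda$ with $|\lambda|<\frac{\alpha-1}{(n-1)^{2}}$ for which $f(n)=0$. Since $0<\alpha-1<1$ and $(n-1)^{2}\ge 1$, we have $|\lambda|<1$, so Theorems~\ref{FD DML} and \ref{SUM ML} are available with $a=0$, $b=n$, $N=2$, $c=\lambda$.

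Next I would show that $f$ solves \eqref{LF FDE 3} with $a=0$, $b=n$. By Theorem~\ref{SUM ML}, the general solution of $(\nabla^{\alpha}_{0}u)(t)+\lambda u(t)=0$ on $\mathbb{N}^{n}_{2}$ is $u(t)=C_{1}E_{-\lambda,\alpha,\alpha-1}(t,0)+C_{2}E_{-\lambda,\alpha,\alpha-2}(t,0)$. Applying $\nabla^{\alpha-1}_{0}$ and using Theorem~\ref{FD DML} gives $(\nabla^{\alpha-1}_{0}u)(t)=C_{1}E_{-\lambda,\alpha,0}(t,0)+C_{2}E_{-\lambda,\alpha,-1}(t,0)$; evaluating this at $t=1$, where the defining series collapse in closed form to $E_{-\lambda,\alpha,0}(1,0)=\frac{1}{1+\lambda}$ and $E_{-\lambda,\alpha,-1}(1,0)=\frac{-\lambda}{1+\lambda}$, the left-focal condition $(\nabla^{\alpha-1}_{0}u)(1)=0$ becomes $C_{1}-\lambda C_{2}=0$. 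Taking $C_{2}=1$ forces $C_{1}=\lambda$, so $u\equiv f$; a direct evaluation gives $f(1)=1\ne 0$, hence this $u$ is nontrivial. Together with the standing hypothesis $f(n)=0$, this makes $u=f$ a nontrivial solution of \eqref{LF FDE 3} with $a=0$, $b=n$, which is precisely \eqref{LF FDE 2} with the constant potential $q\equiv\lambda$.

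Theorem~\ref{LF Theorem 2}, applied with $a=0$, $b=n$ and $q\equiv\lambda$, then yields $(n-1)|\lambda|=\sum_{s=2}^{n}|\lambda|\ge\frac{\alpha-1}{n-1}$, that is, $|\lambda|\ge\frac{\alpha-1}{(n-1)^{2}}$, contradicting the choice of $\lambda$. Hence no such $\lambda$ exists and $f$ has no zero at $t=n$, as claimed. (Alternatively one could invoke Theorem~\ref{LF Theorem 4} directly, but that carries the extra hypothesis $u(t)\ne 0$ on $\mathbb{N}^{n-1}_{2}$, which is not obviously available here, so routing through Theorem~\ref{LF Theorem 2} is cleaner.)

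The step I expect to be the main obstacle is the boundary-condition computation in the second paragraph, namely evaluating the nabla Mittag-Leffler functions with nonpositive second index, $E_{-\lambda,\alpha,0}(\cdot,0)$ and $E_{-\lambda,\alpha,-1}(\cdot,0)$, at the point $t=1$ adjacent to the base $a=0$. One must use that the $k=0$ term of the series for $E_{-\lambda,\alpha,-1}(1,0)$ vanishes (via the convention $1/\Gamma(0)=0$), so that $E_{-\lambda,\alpha,-1}(1,0)=\sum_{k\ge 1}(-\lambda)^{k}=\frac{-\lambda}{1+\lambda}$ rather than $\frac{1}{1+\lambda}$; getting this wrong would produce the wrong ratio $C_{1}/C_{2}$ and break the identification $u\equiv f$. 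Everything else is a routine application of the results already established in the paper.
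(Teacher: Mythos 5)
Your proposal is correct and follows essentially the same route as the paper: identify $f$ as the eigenfunction of the left-focal eigenvalue problem \eqref{LF FDE 4} via Theorems \ref{SUM ML} and \ref{FD DML}, use the condition $\big(\nabla^{\alpha-1}_{0}u\big)(1)=0$ to force $C_{1}=\lambda C_{2}$, and then apply Theorem \ref{LF Theorem 2} with $q\equiv\lambda$ to get $|\lambda|\ge\frac{\alpha-1}{(n-1)^{2}}$. Your added checks (that $|\lambda|<1$ so the Mittag-Leffler machinery applies, and that $f(1)=1$ guarantees nontriviality) are details the paper leaves implicit, but the argument is the same.
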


\begin{proof}
Let $a = 0$, $b = n \in \mathbb{N}_{2}$ and consider the eigenvalue problem
\begin{equation} \label{LF FDE 4}
\begin{cases}
\big{(}\nabla^{\alpha}_{0}u\big{)}(t) + \lambda u(t) = 0, \quad t \in \mathbb{N}^{n}_{2},\\
\big{(}\nabla^{\alpha - 1}_{0}u\big{)}(1) = 0, ~ u(n) = 0.
\end{cases}
\end{equation}
By Theorem \ref{SUM ML}, a general solution of \eqref{LF FDE 4} is given by
\begin{equation} \label{S 1}
u(t) = C_{1}E_{-\lambda, \alpha, \alpha - 1}(t, 0) + C_{2}E_{-\lambda, \alpha, \alpha - 2}(t, 0), \quad t \in \mathbb{N}_{1},
\end{equation}
where $C_{1}$, $C_{2} \in \mathbb{R}$. Applying $\nabla^{\alpha - 1}_{0}$ on both sides of \eqref{S 1}, we get
\begin{equation} \label{S 2}
\big{(}\nabla^{\alpha - 1}_{0}u\big{)}(t) = C_{1}E_{-\lambda, \alpha, 0}(t, 0) - \lambda C_{2}E_{-\lambda, \alpha, \alpha - 1}(t, 0), \quad n \in \mathbb{N}_{1}.
\end{equation}
Using $\big{(}\nabla^{\alpha - 1}_{0}u\big{)}(1) = 0$, we get $C_{1} = \lambda C_{2}$. Using $u(n) = 0$, we have that the eigenvalues $\lambda \in \mathbb{R}$ of \eqref{LF FDE 4} are the solutions of
\begin{equation}
\lambda E_{-\lambda, \alpha, \alpha - 1}(n, 0) + E_{-\lambda, \alpha, \alpha - 2}(n, 0) = 0,
\end{equation}
and the corresponding eigenfunctions are given by
\begin{equation}
u(t) = \lambda E_{-\lambda, \alpha, \alpha - 1}(t, 0) + E_{-\lambda, \alpha, \alpha - 2}(t, 0), \quad t \in \mathbb{N}_{1}.
\end{equation}
By Theorem \ref{LF Theorem 2}, if a real eigenvalue $\lambda$ of \eqref{LF FDE 4} exists, i.e. $\lambda$ is a zero of \eqref{LF FDE 4}, then $|\lambda| \geq \frac{(\alpha - 1)}{(n - 1)^{2}}$.

\end{proof}

\section{Right-Focal Type Boundary Value Problem}

In this section, we derive a few properties of the Green's function for a right-focal type discrete boundary value problem and obtain the corresponding Lyapunov-type inequality.

\begin{thm} \label{RF Theorem 1}
Let $1 < \alpha < 2$ and $h : \mathbb{N}^{b}_{a + 1} \rightarrow \mathbb{R}$. The discrete fractional boundary value problem
\begin{equation} \label{RF FDE 1}
\begin{cases}
\big{(}\nabla^{\alpha}_{a}u\big{)}(t) + h (t) = 0, \quad t \in \mathbb{N}^{b}_{a + 2}, \\ u(a + 1) = 0, ~ \big{(}\nabla^{\alpha - 1}_{a}u\big{)}(b) = 0,
\end{cases}
\end{equation}
has the unique solution 
\begin{equation} \label{RF Solution}
u(t) = \sum^{b}_{s = a + 2}G_{r}(t, s)h(s), \quad t \in \mathbb{N}^{b}_{a + 1},
\end{equation}
where
\begin{equation} \label{RF Green}
G_{r}(t, s) = \begin{cases}
\frac{1}{\Gamma(\alpha)}(t - a - 1)^{\overline{\alpha - 1}}, \hspace{108pt} t \in \mathbb{N}^{\rho(s)}_{a + 1},\\
\frac{1}{\Gamma(\alpha)}\big{[}(t - a - 1)^{\overline{\alpha - 1}} - (t - s + 1)^{\overline{\alpha - 1}}\big{]}, \quad t \in \mathbb{N}^{b}_{s}.
\end{cases}
\end{equation}
\end{thm}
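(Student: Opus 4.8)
The plan is to follow the same route as in the proof of Theorem~\ref{LF Theorem 1}, only with the roles of the two boundary conditions interchanged. First I would apply $\nabla^{-\alpha}_{a}$ to both sides of the equation in \eqref{RF FDE 1} and invoke Theorem~\ref{Sum} to obtain
\begin{equation*}
u(t) = -\big(\nabla^{-\alpha}_{a}h\big)(t) + C_{1}(t - a)^{\overline{\alpha - 1}} + C_{2}(t - a)^{\overline{\alpha - 2}}, \quad t \in \mathbb{N}_{a + 1},
\end{equation*}
for constants $C_{1}, C_{2} \in \mathbb{R}$ to be determined from the boundary data.

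Next I would use the two boundary conditions to pin down $C_{1}$ and $C_{2}$. Applying $\nabla^{\alpha - 1}_{a}$ to the expression for $u$ and using Theorems~\ref{Power Rule}--\ref{Difference} --- so that $\nabla^{\alpha-1}_{a}(t-a)^{\overline{\alpha-1}} = \Gamma(\alpha)$, $\nabla^{\alpha-1}_{a}(t-a)^{\overline{\alpha-2}} = 0$, and $\big(\nabla^{\alpha-1}_{a}(\nabla^{-\alpha}_{a}h)\big)(t) = \big(\nabla^{-1}_{a}h\big)(t) = \sum_{s=a+1}^{t}h(s)$ --- gives $\big(\nabla^{\alpha - 1}_{a}u\big)(t) = -\sum_{s=a+1}^{t}h(s) + C_{1}\Gamma(\alpha)$. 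The condition $\big(\nabla^{\alpha-1}_{a}u\big)(b) = 0$ then forces $C_{1} = \frac{1}{\Gamma(\alpha)}\sum_{s=a+1}^{b}h(s)$. For the other constant I would evaluate $u$ at $t = a+1$, using $\rho(a+1) = a$, hence $1^{\overline{\alpha-1}} = \Gamma(\alpha)$, $1^{\overline{\alpha-2}} = \Gamma(\alpha-1)$ and $\big(\nabla^{-\alpha}_{a}h\big)(a+1) = h(a+1)$; the condition $u(a+1) = 0$ reads $-h(a+1) + C_{1}\Gamma(\alpha) + C_{2}\Gamma(\alpha-1) = 0$, whence $C_{2} = -\frac{1}{\Gamma(\alpha-1)}\sum_{s=a+2}^{b}h(s)$.

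Finally I would substitute $C_{1}, C_{2}$ back, peel off the $s = a+1$ term from $\big(\nabla^{-\alpha}_{a}h\big)(t)$ --- its contribution $\frac{1}{\Gamma(\alpha)}(t-a)^{\overline{\alpha-1}}h(a+1)$ cancels the term produced by $C_{1}$ --- and then merge the surviving $(t-a)^{\overline{\alpha-1}}$ and $(t-a)^{\overline{\alpha-2}}$ contributions via the gamma-function identity
\begin{equation*}
\frac{(t - a)^{\overline{\alpha - 1}}}{\Gamma(\alpha)} - \frac{(t - a)^{\overline{\alpha - 2}}}{\Gamma(\alpha - 1)} = \frac{(t - a - 1)^{\overline{\alpha - 1}}}{\Gamma(\alpha)},
\end{equation*}
which follows from $\Gamma(\alpha) = (\alpha-1)\Gamma(\alpha-1)$ and the definition of the rising function. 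Splitting the remaining sum $\sum_{s=a+2}^{b}$ into $\sum_{s=a+2}^{t}$ and $\sum_{s=t+1}^{b}$, and matching these against $t \in \mathbb{N}^{b}_{s}$ and $t \in \mathbb{N}^{\rho(s)}_{a+1}$ respectively, reads off exactly $G_{r}(t,s)$ in \eqref{RF Green}. Uniqueness is then immediate: the difference of two solutions solves the homogeneous problem, whose general solution $C_{1}(t-a)^{\overline{\alpha-1}} + C_{2}(t-a)^{\overline{\alpha-2}}$ is annihilated by the boundary conditions because the coefficient determinant equals $-\Gamma(\alpha)\Gamma(\alpha-1) \neq 0$. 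I expect the only delicate step to be the algebraic collapse of the two rising-factorial terms into the single term $(t-a-1)^{\overline{\alpha-1}}$ --- this is precisely what makes the right-focal Green's function structurally simpler than its left-focal counterpart --- together with the bookkeeping of the $s = a+1$ term, which ultimately drops out, consistent with the difference equation being posed only on $\mathbb{N}^{b}_{a+2}$.
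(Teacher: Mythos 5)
Your proposal is correct and follows essentially the same route as the paper: the same general solution from Theorem \ref{Sum}, the same determination of $C_{1}$ and $C_{2}$ from the two boundary conditions, the cancellation of the $s = a+1$ term, and the collapse $\frac{(t-a)^{\overline{\alpha-1}}}{\Gamma(\alpha)} - \frac{(t-a)^{\overline{\alpha-2}}}{\Gamma(\alpha-1)} = \frac{(t-a-1)^{\overline{\alpha-1}}}{\Gamma(\alpha)}$. Your closing uniqueness argument via the nonvanishing coefficient determinant is a small, correct addition that the paper leaves implicit.
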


\begin{proof}
Using $\big{(}\nabla^{\alpha - 1}_{a}u\big{)}(b) = 0$ in \eqref{Sol 2}, we get $$C_{1} = \frac{1}{\Gamma(\alpha)}\sum^{b}_{s = a + 1}h(s).$$ Using $u(a + 1) = 0$ in \eqref{Sol 1}, we get $$C_{2} = -\frac{1}{\Gamma(\alpha - 1)}\sum^{b}_{s = a + 2}h(s).$$ Substituting the values of $C_{1}$ and $C_{2}$ in \eqref{Sol 1}, we have
\begin{align*}
u(t) & = -\frac{1}{\Gamma(\alpha)}\sum^{t}_{s = a + 1}(t - s + 1)^{\overline{\alpha - 1}}h(s) + \frac{(t - a)^{\overline{\alpha - 1}}}{\Gamma(\alpha)}\sum^{b}_{s = a + 1}h(s) \\ & - \frac{(t - a)^{\overline{\alpha - 2}}}{\Gamma(\alpha - 1)}\sum^{b}_{s = a + 2}h(s) \\ & = -\frac{1}{\Gamma(\alpha)}\sum^{t}_{s = a + 2}(t - s + 1)^{\overline{\alpha - 1}}h(s) + \frac{(t - a)^{\overline{\alpha - 1}}}{\Gamma(\alpha)}\sum^{b}_{s = a + 2}h(s) \\ & - \frac{(t - a)^{\overline{\alpha - 2}}}{\Gamma(\alpha - 1)}\sum^{b}_{s = a + 2}h(s) \\ & = \frac{1}{\Gamma(\alpha)}\sum^{t}_{s = a + 2}\Big{[}(t - a)^{\overline{\alpha - 1}} - (\alpha - 1)(t - a)^{\overline{\alpha - 2}} - (t - s + 1)^{\overline{\alpha - 1}}\Big{]}h(s) \\ & + \frac{1}{\Gamma(\alpha)}\sum^{b}_{s = t + 1}\Big{[}(t - a)^{\overline{\alpha - 1}} - (\alpha - 1)(t - a)^{\overline{\alpha - 2}}\Big{]}h(s) \\ & = \frac{1}{\Gamma(\alpha)}\sum^{t}_{s = a + 2}\Big{[}(t - a - 1)^{\overline{\alpha - 1}} - (t - s + 1)^{\overline{\alpha - 1}}\Big{]}h(s) \\ & + \frac{1}{\Gamma(\alpha)}\sum^{b}_{s = t + 1}(t - a - 1)^{\overline{\alpha - 1}}h(s) \\ & = \sum^{b}_{s = a + 2}G_{r}(t, s)h(s).
\end{align*}
\end{proof}

First, we show that this Green's function is nonnegative and obtain an upper bound for the Green's function and its integral.

\begin{thm} \label{RF Green Positive}
The Green's function $G_{r}(t, s)$ satisfies $G_{r}(t, s) \geq 0$ for $(t, s) \in \mathbb{N}^{b}_{a + 1} \times \mathbb{N}^{b}_{a + 2}$.
\end{thm}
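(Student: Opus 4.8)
The plan is to split the argument into the two branches of the definition \eqref{RF Green} of $G_r(t,s)$, in complete analogy with the proof of Theorem \ref{LF Green Positive}. Throughout, note that $\alpha > 1$ forces $\Gamma(\alpha) > 0$ by Theorem \ref{Gamma}(1), so in each branch it suffices to show that the quantity inside $\tfrac{1}{\Gamma(\alpha)}[\,\cdots\,]$ is nonnegative.

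First I would treat the upper branch $t \in \mathbb{N}^{\rho(s)}_{a + 1}$, where $G_r(t,s) = \tfrac{1}{\Gamma(\alpha)}(t - a - 1)^{\overline{\alpha - 1}}$. Here I only need $(t - a - 1)^{\overline{\alpha - 1}} \geq 0$. If $t = a + 1$, then $(t - a - 1)^{\overline{\alpha - 1}} = 0^{\overline{\alpha - 1}} = 0$ by the definition of the generalized rising function. If $t \geq a + 2$, then $t - a - 1 \geq 1 > 0$ and $t - a + \alpha - 2 > 0$, so $(t - a - 1)^{\overline{\alpha - 1}} = \Gamma(t - a + \alpha - 2)/\Gamma(t - a - 1) > 0$ by Theorem \ref{Gamma}(1). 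Either way $G_r(t,s) \geq 0$ on this branch.

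Next I would treat the lower branch $t \in \mathbb{N}^{b}_{s}$, where $G_r(t,s) = \tfrac{1}{\Gamma(\alpha)}\big[(t - a - 1)^{\overline{\alpha - 1}} - (t - s + 1)^{\overline{\alpha - 1}}\big]$. Since $s \geq a + 2$ we have $t - s + 1 \leq t - a - 1$, and since $t \geq s \geq a + 2$ both bases are $\geq 1 > 0$; as the exponent $\alpha - 1 > 0$, Theorem \ref{Gamma}(3) gives $(t - s + 1)^{\overline{\alpha - 1}} \leq (t - a - 1)^{\overline{\alpha - 1}}$, so the bracket is nonnegative and $G_r(t,s) \geq 0$. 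This completes the proof; there is no genuine obstacle, the only points requiring care being the boundary index $t = a + 1$ in the first branch (where the rising function is set to zero by fiat rather than computed as a gamma quotient) and the check that both bases in the second branch are strictly positive, which is exactly what makes the monotonicity statement Theorem \ref{Gamma}(3) applicable.
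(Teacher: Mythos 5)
Your proof is correct and follows essentially the same route as the paper: the first branch is nonnegative because the rising function $(t-a-1)^{\overline{\alpha-1}}$ is a nonnegative gamma quotient (vanishing at $t=a+1$), and the second branch is nonnegative because $s\geq a+2$ gives $t-s+1\leq t-a-1$ and the monotonicity in Theorem \ref{Gamma}(3) applies. Your explicit treatment of the boundary case $t=a+1$ via the convention $0^{\overline{r}}:=0$ is a slightly more careful rendering of a point the paper glosses over, but the argument is the same.
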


\begin{proof}
For $t \in \mathbb{N}^{\rho(s)}_{a + 1}$, $$G_{r}(t, s) = \frac{(t - a - 1)^{\overline{\alpha - 1}}}{\Gamma(\alpha)} = \frac{\Gamma(t - a + \alpha - 2)}{\Gamma(\alpha)\Gamma(t - a - 1)} \geq 0.$$ Suppose $t \in \mathbb{N}^{b}_{s}$. Since $a + 2 \leq s$, we have $$(t - a - 1)^{\overline{\alpha - 1}} \geq (t - s + 1)^{\overline{\alpha - 1}},$$ implies $$G_{r}(t, s) = \frac{1}{\Gamma(\alpha)}\Big{[}(t - a - 1)^{\overline{\alpha - 1}} - (t - s + 1)^{\overline{\alpha - 1}}\Big{]}\geq 0.$$ Hence the proof.
\end{proof}

\begin{thm} \label{RF Green Max}
The maximum of the Green's function $G_{r}(t, s)$ defined in \eqref{RF Green} is given by $$\max_{(t, s) \in \mathbb{N}^{b}_{a + 1} \times \mathbb{N}^{b}_{a + 2}}G_{r}(t, s) = \frac{(b - a - 1)^{\overline{\alpha - 1}}}{\Gamma(\alpha)}.$$
\end{thm}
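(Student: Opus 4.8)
The plan is to mirror the proof of Theorem~\ref{LF Green Max}: fix $s \in \mathbb{N}^{b}_{a+2}$, determine the monotonicity of $t \mapsto G_{r}(t,s)$ on each of the two pieces $\mathbb{N}^{\rho(s)}_{a+1}$ and $\mathbb{N}^{b}_{s}$ by evaluating the nabla difference $\nabla_{t}[G_{r}(t,s)]$, and thereby reduce the two-variable maximum to comparing a short list of boundary values as $s$ ranges over $\mathbb{N}^{b}_{a+2}$.

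On the lower piece $t \in \mathbb{N}^{\rho(s)}_{a+1}$ we have $G_{r}(t,s) = \frac{1}{\Gamma(\alpha)}(t-a-1)^{\overline{\alpha-1}}$, so by Theorem~\ref{Gamma}(5), $\nabla_{t}[G_{r}(t,s)] = \frac{\alpha-1}{\Gamma(\alpha)}(t-a-1)^{\overline{\alpha-2}} = \frac{1}{\Gamma(\alpha-1)}(t-a-1)^{\overline{\alpha-2}} \geq 0$ by Theorem~\ref{Gamma}(1) (with the convention $0^{\overline{\alpha-2}}=0$), so $G_{r}(\cdot,s)$ is non-decreasing there. On the upper piece $t \in \mathbb{N}^{b}_{s}$, differentiating the second branch of \eqref{RF Green} with Theorem~\ref{Gamma}(5) gives $\nabla_{t}[G_{r}(t,s)] = \frac{1}{\Gamma(\alpha-1)}\left[(t-a-1)^{\overline{\alpha-2}} - (t-s+1)^{\overline{\alpha-2}}\right]$; since $s \geq a+2$ forces $2-\alpha < t-s+1 \leq t-a-1$, Theorem~\ref{Gamma}(4) gives $(t-a-1)^{\overline{\alpha-2}} \leq (t-s+1)^{\overline{\alpha-2}}$, so $G_{r}(\cdot,s)$ is non-increasing on $\mathbb{N}^{b}_{s}$. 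A cleaner observation feeding the same $\leq$-conclusion: since $(t-s+1)^{\overline{\alpha-1}} = \Gamma(t-s+\alpha)/\Gamma(t-s+1) \geq 0$ for $t \geq s$, the second branch of \eqref{RF Green} is dominated by $\frac{1}{\Gamma(\alpha)}(t-a-1)^{\overline{\alpha-1}}$; together with the lower-piece formula this gives $G_{r}(t,s) \leq \frac{1}{\Gamma(\alpha)}(t-a-1)^{\overline{\alpha-1}}$ for all $(t,s)$, and the right-hand side, being non-decreasing in $t$ (Theorem~\ref{Gamma}(3)), is at most $\frac{1}{\Gamma(\alpha)}(b-a-1)^{\overline{\alpha-1}}$. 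This already yields the ``$\leq$'' half, which is in fact all that the Lyapunov argument below needs.

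For the reverse inequality I would go back to the monotonicity picture: for each fixed $s$ the maximum over $t$ occurs at $t=\rho(s)$ (top of the increasing piece) or at $t=s$ (top of the decreasing piece), so the candidates are $G_{r}(\rho(s),s) = \frac{(s-a-2)^{\overline{\alpha-1}}}{\Gamma(\alpha)}$ and $G_{r}(s,s) = \frac{(s-a-1)^{\overline{\alpha-1}}}{\Gamma(\alpha)} - 1$, which one then maximizes over $s$ using $\nabla_{s}$ and the rising power rule. I expect this last comparison to be the main obstacle: $G_{r}(t,s)$ has a downward jump of size $1$ across $t=s$ (because $(t-s+1)^{\overline{\alpha-1}}$ equals $1^{\overline{\alpha-1}} = \Gamma(\alpha)$ at $t=s$), so the maximum cannot be read off a single branch, and the comparison reduces to inequalities between rising factorials whose arguments differ by $1$ --- exactly the point at which the precise form of the stated constant $\frac{(b-a-1)^{\overline{\alpha-1}}}{\Gamma(\alpha)}$ has to be confirmed. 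Once the optimal pair $(t,s)$ is located, evaluating $(b-a-1)^{\overline{\alpha-1}} = \Gamma(b-a+\alpha-2)/\Gamma(b-a-1)$ gives the closed form in the statement.
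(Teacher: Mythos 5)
Your slicing is the transpose of the paper's: you fix $s$ and track monotonicity in $t$, whereas the paper fixes $t$ and varies $s$. The paper's choice turns out to be the easier one, because the first branch of \eqref{RF Green} does not depend on $s$ at all, so for fixed $t$ the only two candidates are $G_{r}(t,t)$ and $G_{r}(t,t+1)$, and these differ by exactly $1^{\overline{\alpha-1}}/\Gamma(\alpha)=1$; the cross-branch comparison you flag as the main obstacle simply never arises in that parametrization. Your monotonicity computations are correct, and your domination argument $G_{r}(t,s)\le (t-a-1)^{\overline{\alpha-1}}/\Gamma(\alpha)\le (b-a-1)^{\overline{\alpha-1}}/\Gamma(\alpha)$ is a clean, complete proof of the ``$\le$'' half, which, as you observe, is all that Theorem \ref{RF Theorem 2} actually uses.

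The equality half, however, is genuinely missing from your write-up, and your suspicion about the final comparison is borne out in an instructive way: completing your route, the two candidates for fixed $s$ are $G_{r}(s-1,s)=(s-a-2)^{\overline{\alpha-1}}/\Gamma(\alpha)$ and $G_{r}(s,s)=(s-a-1)^{\overline{\alpha-1}}/\Gamma(\alpha)-1$, both increasing in $s$ and hence maximized at $s=b$, giving $(b-a-2)^{\overline{\alpha-1}}/\Gamma(\alpha)$ and $(b-a-1)^{\overline{\alpha-1}}/\Gamma(\alpha)-1$ --- each strictly smaller than the stated value. The stated value is $G_{r}(b,b+1)$, and $s=b+1$ lies outside $\mathbb{N}^{b}_{a+2}$; the paper's own proof evaluates $\max_{t}G_{r}(t,t+1)$ at $t=b$ without noticing this. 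So the theorem's ``maximum'' is really only a supremum-type upper bound that is not attained on the stated domain: your proof of the bound is sound, and no choice of $(t,s)$ in the domain will let you finish the equality as literally stated.
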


\begin{proof}
Clearly, $G_{r}(a + 1, s) = 0$ for each $s \in \mathbb{N}^{b}_{a + 2}$. Fix $t \in \mathbb{N}^{b}_{a + 2}$. For $s \in \mathbb{N}^{b}_{t + 1}$, $\nabla_{s}\big{[}G_{r}(t, s)\big{]} = 0$ implies $G_{r}(t, s)$ is a constant function of $s$. Now, suppose $s \in \mathbb{N}_{a + 1}^{t}$. Consider $$ \nabla_{s}G_{r}(t, s) = \frac{(\alpha - 1)}{\Gamma(\alpha)}(t - s + 2)^{\overline{\alpha - 2}} = \frac{\Gamma(t - s + \alpha)}{\Gamma(\alpha - 1)\Gamma(t - s + 2)} > 0,$$ implies $G_{r}(t, s)$ is an increasing function of $s$. We examine the Green's function to determine whether the maximum for a fixed $t$ will occur at $(t, t)$ or $(t, t + 1)$. We have
\begin{equation}
G_{r}(t, t + 1) = \frac{(t - a - 1)^{\overline{\alpha - 1}}}{\Gamma(\alpha)},
\end{equation}
and
\begin{equation}
G_{r}(t, t) = \frac{(t - a - 1)^{\overline{\alpha - 1}}}{\Gamma(\alpha)} - 1.
\end{equation}
Clearly, $$G_{r}(t, t) < G_{r}(t, t + 1), \quad t \in \mathbb{N}^{b}_{a + 2},$$ and 
\begin{equation*}
\max_{t \in \mathbb{N}^{b}_{a + 1}}G_{r}(t, t + 1) = \frac{(b - a - 1)^{\overline{\alpha - 1}}}{\Gamma(\alpha)}.
\end{equation*}
Thus,
\begin{equation*}
\max_{(t, s) \in \mathbb{N}^{b}_{a + 1} \times \mathbb{N}^{b}_{a + 2}}G_{r}(t, s) = \frac{(b - a - 1)^{\overline{\alpha - 1}}}{\Gamma(\alpha)}.
\end{equation*}
\end{proof}

\begin{thm} \label{FR Green Int Max}
The following inequality holds for the Green's function $G_{r}(t, s)$ from \eqref{RF Green}. $$\sum^{b}_{s = a + 2}G_{r}(t, s) \leq \frac{(b - a - 1)^{\overline{\alpha - 1}}}{\Gamma(\alpha)}(b - a - 1), \quad (t, s) \in \mathbb{N}^{b}_{a + 1} \times \mathbb{N}^{b}_{a + 2}.$$
\end{thm}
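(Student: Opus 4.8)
The plan is to follow the template of Theorem~\ref{LF Green Int Max}: evaluate $\sum_{s=a+2}^{b}G_{r}(t,s)$ in closed form by splitting the summation range according to the two branches of \eqref{RF Green}, and then bound the resulting expression uniformly in $t\in\mathbb{N}^{b}_{a+1}$.

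First I would write, using the $t\in\mathbb{N}^{b}_{s}$ branch of \eqref{RF Green} for the terms with $a+2\le s\le t$ and the $t\in\mathbb{N}^{\rho(s)}_{a+1}$ branch for the terms with $t+1\le s\le b$,
\[
\sum_{s=a+2}^{b}G_{r}(t,s)=\frac{1}{\Gamma(\alpha)}\sum_{s=a+2}^{t}\Big[(t-a-1)^{\overline{\alpha-1}}-(t-s+1)^{\overline{\alpha-1}}\Big]+\frac{1}{\Gamma(\alpha)}\sum_{s=t+1}^{b}(t-a-1)^{\overline{\alpha-1}}.
\]
Since $(t-a-1)^{\overline{\alpha-1}}$ does not depend on $s$, the two pieces combine into $\dfrac{(b-a-1)(t-a-1)^{\overline{\alpha-1}}}{\Gamma(\alpha)}$, leaving the correction term $-\dfrac{1}{\Gamma(\alpha)}\sum_{s=a+2}^{t}(t-s+1)^{\overline{\alpha-1}}$.

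Next I would evaluate that finite sum. This is the same computation already carried out inside the proof of Theorem~\ref{LF Green Int Max}: a reindexing together with the generalized power rule of Theorem~\ref{Power Rule} gives $\sum_{s=a+2}^{t}\dfrac{(t-s+1)^{\overline{\alpha-1}}}{\Gamma(\alpha)}=\dfrac{(t-a-1)^{\overline{\alpha}}}{\Gamma(\alpha+1)}$, so that
\[
\sum_{s=a+2}^{b}G_{r}(t,s)=\frac{(b-a-1)(t-a-1)^{\overline{\alpha-1}}}{\Gamma(\alpha)}-\frac{(t-a-1)^{\overline{\alpha}}}{\Gamma(\alpha+1)}.
\]

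Finally, for every $t\in\mathbb{N}^{b}_{a+1}$ one has $\dfrac{(t-a-1)^{\overline{\alpha}}}{\Gamma(\alpha+1)}=\dfrac{\Gamma(t-a+\alpha-1)}{\Gamma(\alpha+1)\Gamma(t-a-1)}\ge 0$ (with value $0$ at $t=a+1$ by the convention $0^{\overline{\alpha}}=0$), so dropping this nonnegative correction and then using monotonicity of the rising factorial from Theorem~\ref{Gamma}, namely $(t-a-1)^{\overline{\alpha-1}}\le (b-a-1)^{\overline{\alpha-1}}$ since $t\le b$, yields $\sum_{s=a+2}^{b}G_{r}(t,s)\le\dfrac{(b-a-1)^{\overline{\alpha-1}}}{\Gamma(\alpha)}(b-a-1)$, which is the assertion. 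I do not anticipate a real obstacle; the only steps needing attention are the power-rule evaluation of $\sum_{s=a+2}^{t}(t-s+1)^{\overline{\alpha-1}}$ (identical to one in Theorem~\ref{LF Green Int Max}) and confirming nonnegativity of $(t-a-1)^{\overline{\alpha}}$ at the left endpoint.
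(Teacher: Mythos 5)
Your proposal is correct and follows essentially the same route as the paper: split the sum at $s=t$, combine the constant-in-$s$ terms, evaluate $\sum_{s=a+2}^{t}(t-s+1)^{\overline{\alpha-1}}/\Gamma(\alpha)=(t-a-1)^{\overline{\alpha}}/\Gamma(\alpha+1)$ via the power rule, drop that nonnegative term, and maximize over $t$. In fact your intermediate coefficient $(b-a-1)$ is the right one; the paper's displayed intermediate line writes $(t-a-1)$ there, which is a typo that its own final step silently corrects.
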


\begin{proof}
Consider 
\begin{align*}
\sum^{b}_{s = a + 2}G_{r}(t, s) & = \sum^{t}_{s = a + 2}G_{r}(t, s) + \sum^{b}_{s = t + 1}G_{r}(t, s) \\ & = \frac{1}{\Gamma(\alpha)}\sum^{t}_{s = a + 2}\Big{[}(t - a - 1)^{\overline{\alpha - 1}} - (t - s + 1)^{\overline{\alpha - 1}}\Big{]} \\ & + \frac{1}{\Gamma(\alpha)}\sum^{b}_{s = t + 1}(t - a - 1)^{\overline{\alpha - 1}} \\ & = \frac{(t - a - 1)^{\overline{\alpha - 1}}}{\Gamma(\alpha)}(t  - a - 1) - \sum^{t}_{s = a + 2}\frac{(t - s + 1)^{\overline{\alpha - 1}}}{\Gamma(\alpha)} \\ & = \frac{(t - a - 1)^{\overline{\alpha - 1}}}{\Gamma(\alpha)}(t - a - 1) - \frac{(t - a - 1)^{\overline{\alpha}}}{\Gamma(\alpha + 1)}.
\end{align*}
We now find the maximum of this expression with respect to $t \in \mathbb{N}^{b}_{a + 1}$. Since $$\frac{(t - a - 1)^{\overline{\alpha}}}{\Gamma(\alpha + 1)} \geq 0$$ for $t \in \mathbb{N}^{b}_{a + 1}$, 
\begin{align*}
\max_{t \in \mathbb{N}^{b}_{a + 1}}\Big{[}\sum^{b}_{s = a + 2}G_{r}(t, s)\Big{]} & = \frac{(b - a - 1)}{\Gamma(\alpha)}\max_{t \in \mathbb{N}^{b}_{a + 1}}\big{[}(t - a - 1)^{\overline{\alpha - 1}}\big{]} \\ & = \frac{(b - a - 1)^{\overline{\alpha - 1}}}{\Gamma(\alpha)}(b - a - 1).
\end{align*}
\end{proof}

We are now able to formulate a Lyapunov-type inequality for the right focal boundary value problem.

\begin{thm} \label{RF Theorem 2}
If the following discrete fractional boundary value problem
\begin{equation} \label{RF FDE 2}
\begin{cases}
\big{(}\nabla^{\alpha}_{a}u\big{)}(t) + q(t)y(t) = 0, \quad t \in \mathbb{N}^{b}_{a + 2}, \\ u(a + 1) = 0, ~ \big{(}\nabla^{\alpha - 1}_{a}u\big{)}(b) = 0,
\end{cases}
\end{equation}
has a nontrivial solution, then
\begin{equation} \label{RF Lyp}
\sum^{b}_{s = a + 2}|q(s)| \geq \frac{\Gamma(\alpha)}{(b - a - 1)^{\overline{\alpha - 1}}}.
\end{equation}
\end{thm}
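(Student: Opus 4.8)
The plan is to mimic the argument used for Theorem~\ref{LF Theorem 2}, replacing the left-focal Green's function by $G_r$ and invoking its nonnegativity and maximum established above. First I would work in the Banach space $\mathfrak{B}$ of real-valued functions on $\mathbb{N}^{b}_{a+1}$ equipped with the norm $\|u\| = \max_{t \in \mathbb{N}^{b}_{a+1}}|u(t)|$, and observe that by Theorem~\ref{RF Theorem 1} any solution $u$ of \eqref{RF FDE 2} is a fixed point of the associated summation operator, that is,
\[
u(t) = \sum_{s = a+2}^{b} G_{r}(t, s)\, q(s)\, u(s), \qquad t \in \mathbb{N}^{b}_{a+1}.
\]

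Next I would take absolute values and the maximum over $t$. Using that $G_{r}(t, s) \geq 0$ for all $(t, s) \in \mathbb{N}^{b}_{a+1} \times \mathbb{N}^{b}_{a+2}$ (Theorem~\ref{RF Green Positive}), the triangle inequality for finite sums gives
\[
\|u\| \leq \max_{t \in \mathbb{N}^{b}_{a+1}} \sum_{s = a+2}^{b} G_{r}(t, s)\,|q(s)|\,|u(s)| \leq \|u\| \Big[ \max_{(t,s)} G_{r}(t, s) \Big] \sum_{s = a+2}^{b} |q(s)|,
\]
where in the last step I bound $|u(s)| \leq \|u\|$ and $G_{r}(t, s)$ by its maximum over $\mathbb{N}^{b}_{a+1} \times \mathbb{N}^{b}_{a+2}$. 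Since the solution is nontrivial, $\|u\| > 0$, so I may cancel $\|u\|$ to obtain $1 \leq \big[\max_{(t,s)} G_{r}(t,s)\big] \sum_{s = a+2}^{b} |q(s)|$. Finally I would substitute the value $\max_{(t,s)} G_{r}(t,s) = \dfrac{(b-a-1)^{\overline{\alpha-1}}}{\Gamma(\alpha)}$ from Theorem~\ref{RF Green Max} and rearrange, which yields \eqref{RF Lyp}.

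As for the main obstacle: the analytic content has essentially been front-loaded into Theorems~\ref{RF Green Positive} and~\ref{RF Green Max}, so the remaining argument is routine. The two points that require care are the passage from $\big|\sum G_{r} q u\big|$ to $\sum G_{r}\,|q|\,|u|$, which is exactly where nonnegativity of $G_{r}$ is used, and the observation that nontriviality of $u$ forces $\|u\| \neq 0$, without which the cancellation step is invalid. If one wanted a sharper constant, the loss is incurred in replacing $\sum_{s} G_{r}(t,s)\,|q(s)|$ by $\big[\max G_{r}\big]\sum_{s}|q(s)|$; improving on this would require more refined information about the distribution of $q$ over $\mathbb{N}^{b}_{a+2}$ and is beyond what is claimed here.
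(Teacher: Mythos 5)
Your argument is correct and is precisely the argument the paper intends: it omits the proof of this theorem because it is the verbatim analogue of the proof of Theorem~\ref{LF Theorem 2}, with $G_l$ replaced by $G_r$ and Theorems~\ref{RF Green Positive} and~\ref{RF Green Max} supplying the nonnegativity and the maximum $\frac{(b-a-1)^{\overline{\alpha-1}}}{\Gamma(\alpha)}$. No issues.
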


Now, we discuss three applications of Theorem \ref{RF Theorem 2}. First, we obtain a criterion for the nonexistence of nontrivial solutions of \eqref{RF FDE 2}.

\begin{thm} \label{RF Theorem 3}
Assume that $1 < \alpha < 2$ and 
\begin{equation} \label{LF Trivial}
\sum^{b}_{s = a + 2}|q(s)| < \frac{\Gamma(\alpha)}{(b - a - 1)^{\overline{\alpha - 1}}}.
\end{equation}
Then, the discrete fractional boundary value problem \eqref{RF FDE 2} has no nontrivial solution on $\mathbb{N}^{b}_{a + 1}$. 
\end{thm}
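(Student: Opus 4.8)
The plan is to prove Theorem \ref{RF Theorem 3} by a direct contrapositive argument, leaning entirely on the Lyapunov-type inequality already established in Theorem \ref{RF Theorem 2}. First I would assume, for the sake of contradiction, that the discrete fractional boundary value problem \eqref{RF FDE 2} does admit a nontrivial solution $u$ on $\mathbb{N}^{b}_{a + 1}$. The hypothesis $1 < \alpha < 2$ guarantees that all the structural results of Section 4 — in particular the representation \eqref{RF Solution} via the Green's function $G_{r}(t, s)$ and its bounds from Theorems \ref{RF Green Positive}, \ref{RF Green Max}, \ref{FR Green Int Max} — are in force.

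Next I would invoke Theorem \ref{RF Theorem 2} directly: since \eqref{RF FDE 2} has a nontrivial solution, the necessary condition
\begin{equation*}
\sum^{b}_{s = a + 2}|q(s)| \geq \frac{\Gamma(\alpha)}{(b - a - 1)^{\overline{\alpha - 1}}}
\end{equation*}
must hold. But this flatly contradicts the standing assumption \eqref{LF Trivial}, which posits the strict reverse inequality $\sum^{b}_{s = a + 2}|q(s)| < \frac{\Gamma(\alpha)}{(b - a - 1)^{\overline{\alpha - 1}}}$. The contradiction forces the conclusion that \eqref{RF FDE 2} can have no nontrivial solution on $\mathbb{N}^{b}_{a + 1}$, which is exactly the assertion of the theorem.

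I do not anticipate any genuine obstacle here: the statement is a logical restatement (contrapositive) of Theorem \ref{RF Theorem 2}, and no new estimates, no manipulation of the generalized rising functions $(\cdot)^{\overline{\alpha - 1}}$, and no further properties of the Mittag-Leffler machinery are needed. The only points worth being careful about are purely bookkeeping: confirming that the quantity $(b - a - 1)^{\overline{\alpha - 1}} = \Gamma(b - a + \alpha - 2)/\Gamma(b - a - 1)$ is strictly positive (so that the right-hand side of \eqref{LF Trivial} is a well-defined positive number) — which follows from part (1) of Theorem \ref{Gamma} since $b - a - 1 \geq 1$ and $b - a + \alpha - 2 > 0$ — and that the domain $\mathbb{N}^{b}_{a + 1}$ referenced in the conclusion matches the one on which Theorem \ref{RF Theorem 2} produces its nontrivial solution. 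With those trivialities noted, the proof is complete in two lines.
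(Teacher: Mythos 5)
Your proposal is correct and coincides with the paper's intent: Theorem \ref{RF Theorem 3} is simply the contrapositive of the Lyapunov-type inequality in Theorem \ref{RF Theorem 2}, and the paper accordingly states it without further proof. Your two-line contradiction argument, together with the remark that $(b-a-1)^{\overline{\alpha-1}} > 0$, is exactly what is needed.
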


Next, we estimate a lower bound for eigenvalues of the eigenvalue problem corresponding to \eqref{RF FDE 2}.

\begin{thm} \label{RF Theorem 4}
Assume that $1 < \alpha < 2$ and $u$ is a nontrivial solution of the eigenvalue problem
\begin{equation} \label{RF FDE 3}
\begin{cases}
\big{(}\nabla^{\alpha}_{a}u\big{)}(t) + \lambda y(t) = 0, \quad t \in \mathbb{N}^{b}_{a + 2},\\
u(a + 1) = 0, ~ \big{(}\nabla^{\alpha - 1}_{a}u\big{)}(b) = 0,
\end{cases}
\end{equation}
where $u(t) \neq 0$ for each $t \in \mathbb{N}^{b - 1}_{a + 2}$. Then,
\begin{equation} \label{RF Bound}
|\lambda| \geq \frac{\Gamma(\alpha)}{(b - a - 1)(b - a - 1)^{\overline{\alpha - 1}}}.
\end{equation}
\end{thm}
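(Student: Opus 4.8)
The plan is to obtain \eqref{RF Bound} as an immediate corollary of the Lyapunov-type inequality in Theorem \ref{RF Theorem 2}, applied to the constant coefficient $q(t) \equiv \lambda$. If $u$ is a nontrivial solution of the eigenvalue problem \eqref{RF FDE 3}, then $u$ is also a nontrivial solution of the discrete fractional boundary value problem \eqref{RF FDE 2} with $q(t) = \lambda$ for every $t \in \mathbb{N}^{b}_{a+2}$. First I would dispose of the trivial case: if $\lambda = 0$, then $\sum_{s=a+2}^{b}|q(s)| = 0 < \Gamma(\alpha)/(b-a-1)^{\overline{\alpha-1}}$, so Theorem \ref{RF Theorem 3} would force $u \equiv 0$, contradicting the hypothesis; hence $\lambda \neq 0$.

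Next I would invoke Theorem \ref{RF Theorem 2} with this choice of $q$ (note that the theorem only involves $|q(s)|$, so the sign of $\lambda$ is irrelevant), which gives
\[
\sum_{s=a+2}^{b}|\lambda| \;\geq\; \frac{\Gamma(\alpha)}{(b-a-1)^{\overline{\alpha-1}}}.
\]
Then I would count the summands: the index $s$ runs over $\mathbb{N}^{b}_{a+2}$, which contains exactly $b-(a+2)+1 = b-a-1$ points, and this is a positive integer since the equation is posed on $\mathbb{N}^{b}_{a+2}$ forces $b \geq a+2$. Therefore the left-hand side equals $|\lambda|(b-a-1)$, and dividing by the positive quantity $(b-a-1)$ yields
\[
|\lambda| \;\geq\; \frac{\Gamma(\alpha)}{(b-a-1)\,(b-a-1)^{\overline{\alpha-1}}},
\]
which is precisely \eqref{RF Bound}.

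Since the argument is essentially a one-line deduction, there is no substantive obstacle; the only points needing a moment's care are verifying $\lambda \neq 0$ (handled above via Theorem \ref{RF Theorem 3}) and correctly counting the number of terms in $\sum_{s=a+2}^{b}$. The extra hypothesis that $u(t) \neq 0$ for each $t \in \mathbb{N}^{b-1}_{a+2}$ is not actually used in this deduction; it is stated only to make precise the sense in which $\lambda$ is an eigenvalue with a non-vanishing interior eigenfunction, paralleling the left-focal statement in Theorem \ref{LF Theorem 4}.
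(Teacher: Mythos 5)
Your deduction is correct and is exactly the argument the paper intends (the proof is omitted in the text, but the stated bound is precisely Theorem \ref{RF Theorem 2} applied with $q \equiv \lambda$ divided by the $b-a-1$ terms of the sum over $\mathbb{N}^{b}_{a+2}$, mirroring how Theorem \ref{LF Theorem 4} follows from Theorem \ref{LF Theorem 2}). Your observations that $\lambda \neq 0$ is forced and that the interior non-vanishing hypothesis is not actually needed for this deduction are both accurate.
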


Finally, we deduce a criterion for the nonexistence of real zeros of certain nabla Mittag-Leffler functions.

\begin{thm}
Let $1 < \alpha < 2$. Then, the function $E_{-\lambda, \alpha, 0}(t, 0) + \lambda E_{-\lambda, \alpha, \alpha - 1}(t, 0)$ has no real zeros for $$|\lambda| < \frac{\Gamma(\alpha)}{(n - 1)(n - 1)^{\overline{\alpha - 1}}}.$$
\end{thm}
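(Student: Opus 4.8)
The plan is to follow the argument used for Theorem \ref{LF Theorem 5}, replacing the left-focal boundary conditions by the right-focal ones and appealing to Theorem \ref{RF Theorem 2} in place of Theorem \ref{LF Theorem 2}. First I would set $a = 0$, $b = n \in \mathbb{N}_{2}$ and consider the right-focal eigenvalue problem \eqref{RF FDE 3} with these values, namely $\big(\nabla^{\alpha}_{0}u\big)(t) + \lambda u(t) = 0$ on $\mathbb{N}^{n}_{2}$ with $u(1) = 0$ and $\big(\nabla^{\alpha - 1}_{0}u\big)(n) = 0$. Since this has the same underlying fractional equation as \eqref{LF FDE 4}, Theorem \ref{SUM ML} gives its general solution as in \eqref{S 1}, $u(t) = C_{1}E_{-\lambda, \alpha, \alpha - 1}(t, 0) + C_{2}E_{-\lambda, \alpha, \alpha - 2}(t, 0)$ for $t \in \mathbb{N}_{1}$, and Theorem \ref{FD DML} (together with the identity $E_{-\lambda, \alpha, -1}(t, 0) = -\lambda E_{-\lambda, \alpha, \alpha - 1}(t, 0)$ used in the proof of Theorem \ref{LF Theorem 5}) gives its $(\alpha - 1)$-th nabla difference as in \eqref{S 2}, $\big(\nabla^{\alpha - 1}_{0}u\big)(t) = C_{1}E_{-\lambda, \alpha, 0}(t, 0) - \lambda C_{2}E_{-\lambda, \alpha, \alpha - 1}(t, 0)$.

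Next I would impose the boundary conditions to extract the characteristic equation. Since $1^{\overline{r}} = \Gamma(r + 1)$, each term of $E_{-\lambda, \alpha, \beta}(1, 0)$ collapses to $(-\lambda)^{k}$, so $E_{-\lambda, \alpha, \beta}(1, 0) = (1 + \lambda)^{-1}$ for every $\beta$; hence $u(1) = 0$ forces $C_{2} = -C_{1}$. Substituting this into the expression for $\big(\nabla^{\alpha - 1}_{0}u\big)(t)$ and imposing $\big(\nabla^{\alpha - 1}_{0}u\big)(n) = 0$ shows that the real eigenvalues $\lambda$ of the specialized problem are exactly the real solutions of
\[
E_{-\lambda, \alpha, 0}(n, 0) + \lambda E_{-\lambda, \alpha, \alpha - 1}(n, 0) = 0,
\]
with associated eigenfunction $u(t) = E_{-\lambda, \alpha, \alpha - 1}(t, 0) - E_{-\lambda, \alpha, \alpha - 2}(t, 0)$, $t \in \mathbb{N}_{1}$. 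This $u$ is nontrivial, because the two nabla Mittag-Leffler functions appearing in \eqref{S 1} are linearly independent (as guaranteed by Theorem \ref{SUM ML}).

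Finally I would apply Theorem \ref{RF Theorem 2} with $q(s) \equiv \lambda$ on $\mathbb{N}^{n}_{2}$: the nontrivial eigenfunction just produced is a nontrivial solution of \eqref{RF FDE 2}, so the right-focal Lyapunov inequality gives $(n - 1)|\lambda| = \sum_{s = 2}^{n}|\lambda| \geq \Gamma(\alpha)/(n - 1)^{\overline{\alpha - 1}}$, i.e. $|\lambda| \geq \Gamma(\alpha)/\big[(n - 1)(n - 1)^{\overline{\alpha - 1}}\big]$. Contrapositively, if $|\lambda| < \Gamma(\alpha)/\big[(n - 1)(n - 1)^{\overline{\alpha - 1}}\big]$ then $\lambda$ cannot solve the characteristic equation, which is exactly the claimed nonexistence of real zeros of $E_{-\lambda, \alpha, 0}(t, 0) + \lambda E_{-\lambda, \alpha, \alpha - 1}(t, 0)$. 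The computation is essentially routine; the steps that warrant care are the Mittag-Leffler bookkeeping (the evaluation $E_{-\lambda, \alpha, \beta}(1, 0) = (1 + \lambda)^{-1}$ and the shift identity at $\beta = -1$) and the verification that the hypotheses of Theorem \ref{RF Theorem 2} really hold, i.e. that $u$ is genuinely nontrivial and that $|{-\lambda}| < 1$, as required in Definition \ref{DML} for the series to converge; the latter is automatic, since the threshold $\Gamma(\alpha)/\big[(n - 1)(n - 1)^{\overline{\alpha - 1}}\big]$ does not exceed $1$ for any $n \in \mathbb{N}_{2}$.
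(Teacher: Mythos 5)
Your proposal is correct and follows essentially the same route as the paper: specialize the right-focal eigenvalue problem to $a=0$, $b=n$, use the general solution \eqref{S 1}--\eqref{S 2} to derive the characteristic equation $E_{-\lambda,\alpha,0}(n,0)+\lambda E_{-\lambda,\alpha,\alpha-1}(n,0)=0$ from $u(1)=0$ and $\big(\nabla^{\alpha-1}_{0}u\big)(n)=0$, and then invoke Theorem \ref{RF Theorem 2}. The extra details you supply (the evaluation $E_{-\lambda,\alpha,\beta}(1,0)=(1+\lambda)^{-1}$, nontriviality of the eigenfunction, and the check that $|\lambda|<1$) are correct refinements of the paper's argument rather than a different approach.
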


\begin{proof}
Let $a = 0$, $b = n \in \mathbb{N}_{2}$ and consider the eigenvalue problem
\begin{equation} \label{RF FDE 4}
\begin{cases}
\big{(}\nabla^{\alpha}_{0}u\big{)}(t) + \lambda u(t) = 0, \quad t \in \mathbb{N}^{n}_{2},\\
u(1) = 0, ~ \big{(}\nabla^{\alpha - 1}_{0}u\big{)}(n) = 0.
\end{cases}
\end{equation}
Using $u(1) = 0$ in \eqref{S 1}, we get $C_{1} = -C_{2}$. Using $\big{(}\nabla^{\alpha - 1}_{0}u\big{)}(n) = 0$ in \eqref{S 2}, we have that the eigenvalues $\lambda \in \mathbb{R}$ of \eqref{RF FDE 4} are the solutions of
\begin{equation}
E_{-\lambda, \alpha, 0}(n, 0) + \lambda E_{-\lambda, \alpha, \alpha - 1}(n, 0) = 0,
\end{equation}
and the corresponding eigenfunctions are given by
\begin{equation}
u(t) = E_{-\lambda, \alpha, \alpha - 1}(t, 0) - E_{-\lambda, \alpha, \alpha - 2}(t, 0), \quad t \in \mathbb{N}_{1}.
\end{equation}
By Theorem \ref{RF Theorem 2}, if a real eigenvalue $\lambda$ of \eqref{RF FDE 4} exists, i.e. $\lambda$ is a zero of \eqref{RF FDE 4}, then $|\lambda| \geq \frac{\Gamma(\alpha)}{(n - 1)(n - 1)^{\overline{\alpha - 1}}}$. Hence the proof.
\end{proof}

\end{document}